\newtheorem{theorem}{Theorem}[section]
\newtheorem*{theorem*}{Theorem}
\newtheorem{corollary}[theorem]{Corollary}
\newtheorem{lemma}[theorem]{Lemma}
\newtheorem{proposition}[theorem]{Proposition}
\newtheorem{remark}[theorem]{Remark}
\newtheorem*{claim*}{Claim}
\theoremstyle{definition}
 \newtheorem{definition}[theorem]{Definition}
 \newtheorem{example}[theorem]{Example}
\newcommand{\N}{\mathbb N}
\newcommand{\Z}{\mathbb Z}
\newcommand{\Q}{\mathbb Q}
\newcommand{\R}{\mathbb R}
\newcommand{\K}{\mathbf{k}}
\newcommand{\sv}{\sigma^{\vee}}
\newcommand{\gsv}{G(\sv)}
\newcommand{\xs}{X(\sigma)}
\newcommand{\dez}{\operatorname{det}_0}
\newcommand{\jp}{\mathcal{J}_p(\sigma)}
\newcommand{\jz}{\mathcal{J}_0(\sigma)}
\newcommand{\A}{\mathcal{A}}
\newcommand{\npc}{\mathcal{N}_0(\omega_{P})}
\newcommand{\npp}{\mathcal{N}_p(\omega_{P})}
\newcommand{\sip}{\omega_{P}}
\newcommand{\sd}{\sigma^{\vee}}
\newcommand{\GM}[0]{\ensuremath{\mathbb{G}_{\mathrm{m}}}}
\newcommand{\skel}{\operatorname{skel}}
\newcommand{\con}[1]{\operatorname{Cone}{(#1)}}
\begin{document}

\title{Characteristic-free normalized Nash blowup of toric varieties}

\author[{F. Castillo}]{Federico Castillo}
\address{Facultad de Matemáticas, Pontificia Universidad Católica de Chile, Santiago, Chile}  
\email{federico.castillo@uc.cl}

 \author[D. Duarte]{Daniel Duarte}
\address{Centro de Ciencias Matematicas, UNAM, Morelia, M\'exico}  \email{adduarte@matmor.unam.mx}

\author[M. Leyton-Alvarez]{Maximiliano Leyton-\'Alvarez} %
\address{Instituto de Matem\'aticas, Universidad de Talca, Talca, Chile} %
\email{mleyton@utalca.cl}

\author[A. Liendo]{Alvaro Liendo} %
\address{Instituto de Matem\'aticas, Universidad de Talca, Talca, Chile} %
\email{aliendo@utalca.cl}

\date{\today}

\thanks{{\it 2000 Mathematics Subject
    Classification}: 14B05, 14E15, 14M25, 52B20.  \\
 \mbox{\hspace{11pt}}{\it Key words}:  Resolution of singularities, Nash blowup, Toric varieties.\\
 \mbox{\hspace{11pt}} The first author was partially supported by Fondecyt project 1221133. The second author was partially supported by CONAHCYT project CF-2023-G-33 and PAPIIT grant IN117523. The third author was partially supported by Fondecyt project 1221535. Finally, the fourth author was partially supported by Fondecyt project 1240101.}

\begin{abstract}
We introduce conditions on cones of normal toric varieties under which the polyhedron defining the normalized Nash blowup does not depend on the characteristic of the base field. As a consequence, we deduce several results on the resolution of singularities properties of normalized Nash blowups. 
In particular, we recover all known results of the families that can be resolved via normalized Nash blowups in positive characteristic.
We also provide new families of toric varieties whose normalized Nash blowup is non-singular in arbitrary characteristic.
\end{abstract}

\maketitle

 \tableofcontents

\section*{Introduction}

Let $X \subseteq \mathbb{A}^n_{\K}$ be an equidimensional affine algebraic variety of dimension $d$, where $\K$ is an algebraically closed field of arbitrary characteristic. Consider the Gauss map:
$$
\mathcal{G} : X \setminus \operatorname{Sing}(X) \to \operatorname{Grass}(d, n)\quad\mbox{given by}\quad x \mapsto T_x X\,,$$
where $\operatorname{Grass}(d, n)$ is the Grassmannian of $d$-dimensional vector spaces in $\K^n$, and $T_x X$ is the tangent space to $X$ at $x$. Denote by $X^*$ the Zariski closure of the graph of $\mathcal{G}$. Let $\nu\colon X^*\to X$ be the restriction to $X^*$ of the projection from $X \times \operatorname{Grass}(d, n)$ to $X$. The map $\nu$ is a birational morphism that is an isomorphism over $X\setminus\operatorname{Sing}(X)$ called the \emph{Nash blowup} of $X$, and the composition $\eta\circ\nu$, where $\eta$ is the normalization map is called the \emph{normalized Nash blowup} of $X$ \cite{Nob75,GS1}.

The resolution properties of Nash blowups and their normalized versions 
have been studied extensively by many authors 
\cite{Nob75,Reb,GS1,GS2,Hi83,Sp90,GS3,EGZ,Ataetal,GrMi12,GoTe14,DJNB,DDR}.

Recently, the authors of this paper showed that iterating Nash blowups fails to provide a resolution of singularities in dimensions four and higher \cite{CDLAL}. This was shown by constructing explicit counterexamples of normal affine varieties whose Nash blowup contains an affine chart isomorphic to the variety itself. We also remark that the counterexamples in \cite{CDLAL} were obtained using the software implementation developed for this project in SAGE \cite{sagemath}. The main open questions concerning the resolution properties of Nash blowups remain in dimensions 2 and 3; see \cite[Introduction]{CDLAL} for further details.

The case of toric varieties has received special attention. In fact, the known counterexamples to Nash's question are toric varieties. It is then crucial to first settle the toric case in dimensions two and three before exploring other families. This paper aims at providing new tools for the study of normalized Nash blowups of toric varieties, with a strong emphasis on the role of the characteristic of the base field.

The content of this paper was inspired by recent results on Nash blowups of normal toric surfaces and 2-generic determinantal varieties (which are also normal toric varieties) over fields of positive characteristic. In both cases, it was proved that the normalized Nash blowup resolves their singularities by showing that the combinatorics of the objects defining the Nash blowup are independent of the characteristic \cite{DJNB,DDR}. The results were thus deduced from the characteristic zero case \cite{GS1,EGZ}. Nevertheless, it is known that the combinatorics of Nash blowups do depend on the characteristic in general \cite{DJNB}. It is then of interest to find out under which conditions the Nash blowup is free of characteristic.

In this paper we introduce two different but related conditions on a cone under which the combinatorial objects defining the normalized Nash blowup of the corresponding toric variety are independent of the characteristic. The first condition is called $G$-stability (see \cref{def:DCMGS}), the second one is related to lattice polytopes and the cones they generate (see Definitions \ref{def:G-flat} and \ref{def:smooth}). Both notions were partly inspired by the two-dimensional case, where minimal generating sets are determined by convex conditions \cite{oda1983convex}.

To state our main theorems, first recall that the normalized Nash blowup of a toric variety $X(\sigma)$ corresponds to the blowup of the logarithmic Jacobian ideal $\mathcal{J}_p(\sigma)$, where $p\geq0$ is the characteristic of the base field \cite{GS1,DJNB}. Being a monomial ideal, its normalized blowup is determined by some Newton polyhedron, which we denote as $\mathcal{N}_p(\sigma)$. Our two main theorems study this Newton polyhedron.

\begin{theorem*}(see \cref{th:Nash polyh} and \cref{cor:char_free})
Let $\sigma$ be a full-dimensional strongly convex cone. Suppose that $\sigma^\vee$ is $G$-stable or it is defined by a smooth $G$-flat lattice polytope. Then $\mathcal{N}_0(\sigma)=\mathcal{N}_p(\sigma)$ for all $p>0$ prime.    
\end{theorem*}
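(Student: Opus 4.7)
The plan is to exploit the fact that $\mathcal{J}_p(\sigma) \subseteq \mathcal{J}_0(\sigma)$: the logarithmic Jacobian ideal in characteristic $p$ is obtained from its characteristic-zero counterpart by discarding those monomial generators whose coefficient — a certain determinant of exponent vectors in $\sigma^\vee$ — vanishes modulo $p$. Since passing to Newton polyhedra preserves containment of monomial ideals, one has $\mathcal{N}_p(\sigma) \subseteq \mathcal{N}_0(\sigma)$ for free. The content of the theorem is therefore the reverse inclusion, and for this it suffices to verify that every vertex of $\mathcal{N}_0(\sigma)$ is realized as the exponent of some monomial that also belongs to $\mathcal{J}_p(\sigma)$.

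Each vertex of $\mathcal{N}_0(\sigma)$ is a sum $v_1 + \cdots + v_d$ coming from a $d$-tuple $(v_1,\ldots,v_d)$ of elements of $G(\sigma^\vee)$ whose determinant is nonzero over $\mathbb{Q}$. My plan is to show that under each of the two hypotheses this determinant is in fact $\pm 1$, so it survives reduction modulo $p$ for every prime $p$; the corresponding generator then also generates $\mathcal{J}_p(\sigma)$ and contributes the very same vertex to $\mathcal{N}_p(\sigma)$. Equivalently, the goal is to rule out, under each hypothesis, the appearance of vertex-producing $d$-tuples whose determinant is a nonzero integer divisible by some prime $p$.

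For the $G$-stability hypothesis, I would unwind \cref{def:DCMGS} and check that the $d$-tuples of $G(\sigma^\vee)$ that produce vertices of $\mathcal{N}_0(\sigma)$ all lie in the distinguished class for which $G$-stability forces a unimodular determinant. For the smooth $G$-flat polytope hypothesis, I would instead pass to a lattice polytope $P$ with $\sigma^\vee = \con{P}$: $G$-flatness identifies the generators of $G(\sigma^\vee)$ with the lattice points of $P$ sitting at a common height, and smoothness of $P$ guarantees that every vertex of $P$ has a unimodular star. The $d$-tuples that select vertices of the Newton polyhedron then correspond to unimodular simplices of $P$, whose determinants are automatically $\pm 1$.

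The main obstacle I anticipate is the combinatorial identification step: showing that every vertex of $\mathcal{N}_0(\sigma)$ is indeed indexed by a $d$-tuple falling under the hypothesis, rather than by some other tuple whose determinant could be a nonunit divisible by $p$. This amounts to a structural description of vertex-producing tuples of $G(\sigma^\vee)$ — most likely obtained by a local convexity / support-hyperplane argument at a vertex of $\mathcal{N}_0(\sigma)$ — and is presumably where the work of the proof concentrates. Once that reduction is in place, the unimodularity provided by $G$-stability or by smooth $G$-flatness applies uniformly, and the equality $\mathcal{N}_0(\sigma) = \mathcal{N}_p(\sigma)$ follows for every prime $p$.
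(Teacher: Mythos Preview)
Your high-level reduction is exactly the paper's: since $\mathcal{J}_p(\sigma)\subseteq\mathcal{J}_0(\sigma)$ one has $\mathcal{N}_p(\sigma)\subseteq\mathcal{N}_0(\sigma)$, and equality follows once every vertex of $\mathcal{N}_0(\sigma)$ admits a representation $h_1+\cdots+h_d$ with $\det(h_1\cdots h_d)=\pm1$. You correctly flag the ``combinatorial identification step'' as the main obstacle; the issue is that you have not supplied it, and what you sketch in its place does not match how the argument actually goes.

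For the $G$-stable case, ``unwinding \cref{def:DCMGS}'' does not directly yield anything about determinants of $d$-tuples; there is no ``distinguished class for which $G$-stability forces a unimodular determinant'' visible from the definition. The paper instead argues the contrapositive: if $v=h_1+\cdots+h_d$ has every representation with $|\det|>1$, then $v$ is \emph{not} a vertex of $\mathcal{N}_0(\sigma)$. This uses $G$-stability in an essential but indirect way: one passes to a minimal non-regular face $\delta$ of $\operatorname{Cone}(h_1,\ldots,h_d)$, invokes the existence of a $G$-desingularization of $\delta$ (\cref{th:G-des}, itself a nontrivial inductive argument occupying all of \S\ref{subsec:G-desingularization}), and then uses the resulting regular refinement to manufacture points of $\mathcal{N}_0(\sigma)$ whose convex hull contains $v$ in its relative interior. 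The role of $G$-stability is to ensure that the new rays produced by the desingularization are still Hilbert basis elements of $\sigma^\vee$, so the auxiliary points genuinely lie in $\mathcal{J}_0(\sigma)$. None of this is a local support-hyperplane argument of the kind you gesture at.

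For the smooth $G$-flat case your sketch is closer, but the statement ``the $d$-tuples that select vertices of the Newton polyhedron correspond to unimodular simplices of $P$'' is precisely what needs proof. The paper establishes this via the barycentric hull: \cref{thm:baryhull} shows that for smooth $P$ the vertices of $\mathsf{B}(P)$ are exactly the barycenters of the \emph{corners} of $P$ (the unimodular simplex at each vertex of $P$), and the vertices of $\mathcal{N}_0(\sigma)$ are then $(d+1)$ times these. Smoothness enters because it makes each corner a lattice basis; $G$-flatness enters because it identifies $G(\omega_P)$ with $(P\cap M)\times\{1\}$. Without \cref{thm:baryhull} (or an equivalent), one has no control over which lattice $d$-simplices in $P$ actually produce vertices.
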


As a consequence, any property of the normalized Nash blowup of the toric variety $X(\sigma)$ that corresponds to a combinatorial property of the Newton polyhedron, holds over a characteristic zero field if and only if it holds over a prime characteristic field. As particular cases we recover one of the main theorems of \cite{DJNB} and a normalized version of the main theorem of \cite{DDR}.

Finally, as a consequence of our work on lattice polytopes, we can construct a family of normal toric varieties whose normalized Nash blowup is non-singular in arbitrary characteristic.

\begin{theorem*}(see \cref{thm:altura1})
Let $ P\subset M_{\R} $ be a smooth and $G$-flat polytope. Let $X$ be the normal toric variety defined by the cone generated by $\{(v,1)\mid v\in P\}\subset M_{\R}\times\R$. Then the normalized Nash blowup of $X$ is non-singular over fields of arbitrary characteristic.
\end{theorem*}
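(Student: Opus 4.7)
The plan is a two-step reduction followed by a combinatorial computation. First, since $\sigma^\vee=\operatorname{cone}\{(v,1)\mid v\in P\}$ is by construction defined by the smooth and $G$-flat polytope $P$, \cref{cor:char_free} yields $\mathcal{N}_0(\sigma)=\mathcal{N}_p(\sigma)$ for every prime $p$. Because the normalized Nash blowup of $X$ is the toric variety associated to the normal fan of this Newton polyhedron (regarded as a refinement of $\sigma$), its non-singularity is a combinatorial property of $\mathcal{N}_0(\sigma)$ and hence independent of the characteristic of the base field. It therefore suffices to exhibit a smooth refinement of $\sigma$ induced by $\mathcal{N}_0(\sigma)$ in characteristic zero.

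Let $d=\dim\sigma=\dim P+1$. In characteristic zero, $\mathcal{J}_0(\sigma)$ is generated by monomials $\chi^{m_1+\cdots+m_d}$ with $m_1,\ldots,m_d\in\sigma^\vee\cap(M\times\Z)$ linearly independent, and every such generator has last coordinate at least $d$. The minimal-height generators occur precisely when each $m_i$ has last coordinate one, so they correspond to affinely independent $d$-tuples $(p_1,\ldots,p_d)$ in $P\cap M$ via the exponent $(p_1+\cdots+p_d,\,d)$. For each vertex $v$ of $P$, smoothness provides adjacent vertices $u^1(v),\ldots,u^{d-1}(v)$ with $\{u^i(v)-v\}$ a $\Z$-basis of $M$, so the rays $(v,1),(u^1(v),1),\ldots,(u^{d-1}(v),1)$ generate a smooth simplicial subcone of $\sigma^\vee$ and $e_v:=(v+\sum_i u^i(v),\,d)$ is one such minimal-height exponent. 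The dual of this smooth simplicial cone is itself a smooth $d$-dimensional cone in $N\times\Z$, so provided $e_v$ is a vertex of $\mathcal{N}_0(\sigma)$, its normal cone in the induced refinement of $\sigma$ is automatically smooth.

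The heart of the proof is then to show that $\{e_v\mid v\text{ vertex of }P\}$ is exactly the vertex set of $\mathcal{N}_0(\sigma)$ and that the corresponding normal cones cover $\sigma$. Combinatorially this amounts to two statements: (i) for every linear functional $\ell$ on $M_{\R}$ minimized on $P$ at a vertex $v$, the unique affinely independent $d$-tuple in $P\cap M$ minimizing $\sum_i\ell(p_i)$ is $\{v,u^1(v),\ldots,u^{d-1}(v)\}$; and (ii) the unimodular simplices $\{v,u^1(v),\ldots,u^{d-1}(v)\}$ triangulate $P$. Both are where the $G$-flat hypothesis is forced to enter, as $G$-flatness is precisely what prevents non-vertex lattice points of $P$ from undercutting the extremal sums and what ensures that the local smooth structure at each vertex patches globally into a triangulation. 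Granting (i) and (ii), every vertex of $\mathcal{N}_0(\sigma)$ has smooth normal cone, these cones assemble into a smooth fan refining $\sigma$, and the normalized Nash blowup is non-singular in every characteristic.

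The main obstacle is statement (i): verifying that $G$-flatness forces the minimum of $\sum_i\ell(p_i)$ over affinely independent $d$-tuples in $P\cap M$ to be attained only on the simplex of vertex-adjacent lattice points of $P$, so that no stray lattice point of $P$ creates a spurious vertex of $\mathcal{N}_0(\sigma)$ whose normal cone could fail to be smooth. Once this extraction from the definition of $G$-flatness is carried out, statement (ii) and the global smoothness of the refinement should follow routinely from the smoothness of $P$.
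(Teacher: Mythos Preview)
Your reduction to characteristic zero via \cref{cor:char_free} is exactly right, and your identification of the candidate vertices $e_v=(v+\sum_i u^i(v),\,d)$ coming from the corners of $P$ is also correct. However, the argument breaks down at the step where you deduce smoothness of the normal cones.

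The central gap is that the normal cone of $\mathcal{N}_0(\sigma)$ at $e_v$ is \emph{not} simply the dual of the simplicial cone spanned by $(v,1),(u^1(v),1),\dots,(u^{d-1}(v),1)$. The feasible cone at $e_v$ is generated by the tail cone $\omega_P$ together with all edge directions $e_{v'}-e_v$ to neighbouring vertices. A priori these edge directions could be arbitrary, and nothing in your outline controls them. The paper handles this via \cref{thm:baryhull} on the barycentric hull: it shows that the edges of $\mathsf{B}(P)$ are parallel to the corresponding edges of $P$, so that the feasible cone at $e_v$ simplifies to $\operatorname{fcone}(P,v)+\operatorname{Cone}\{(v,1)\}$, which is then visibly regular by smoothness of $P$. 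Your proposal is missing this key structural result (or an equivalent), and without it the smoothness claim is unsupported.

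Two further remarks. Your statement~(ii), that the corner simplices triangulate $P$, is false already for the unit square (the four corner triangles overlap), and it is in any case unnecessary: once every vertex of $\mathcal{N}_0(\sigma)$ has regular feasible cone, the normal fan is automatically a regular refinement of $\sigma$. Also, you misplace the role of $G$-flatness: it is used only to guarantee that the Hilbert basis of $\omega_P$ sits at height one (so that $\mathcal{N}_0(\sigma)$ is governed by $\mathsf{B}(P)$), whereas statement~(i)---that the vertices of $\mathcal{N}_0(\sigma)$ are exactly the $e_v$---follows from smoothness alone via \cref{thm:baryhull}.
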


To prove this theorem, we introduce the notion of barycentric hull (see \cref{def:barycentric-hull}). We show that the barycentric hull of a smooth polytope $P$ has a shape similar to $P$ (see \cref{thm:baryhull}). This result may be of independent interest.

In the particular case where $P$ is a polygon, $G$-flatness is a consequence of smoothness. Hence the three-dimensional family of toric varieties defined by smooth polygons as in the theorem is resolved by a normalized Nash blowup. This provides evidence towards a positive answer on the question of the resolution properties of the normalized Nash blowup in dimension three.

\medskip

The paper is divided as follows. We first recall the basic facts from toric geometry that we need. Section 2 is devoted to the study of $G$-stability and its implications on the normalized Nash blowup. Section 3 studies Nash blowups of toric varieties defined by cones associated to lattice polytopes. Both sections contain a great deal of examples illustrating the notions we introduce. Many of these examples were obtained using the software SAGE \cite{sagemath}.

\subsection*{Acknowledgments}

This collaboration began during the third named author's visit to the Centro de Ciencias Matemáticas, UNAM Campus Morelia, in September 2023. The first and fourth authors joined the project at the \href{https://sites.google.com/view/agrega0/home}{AGREGA} workshop, held at the Universidad de Talca in January 2024, where the second named author presented the Nash blowup conjecture. We extend our gratitude to these institutions for their support and hospitality. We also thank Takehiko Yasuda for explaining the content of \cref{rem:moderate} to us.

\section{Preliminaries on toric varieties}\label{sec:toric}

A toric variety $X$ is a normal variety endowed with a faithful regular action $T\times X\rightarrow X$ of the algebraic torus $T=\GM^d$ having an open orbit, where $\GM$ is the multiplicative group of the base field $\K$. Toric varieties admit a combinatorial description that we recall now, for details, see \cite{oda1983convex,fulton1993introduction,cox2011toric,GoTe14}. 

Denote as $M$ (resp. $N$) the set of characters (resp. 1-parameter subgroups) of $T$. These are dual free abelian groups of rank $\dim T$. There is a natural duality pairing $M\times N\rightarrow \Z$, $(u,p)\mapsto \langle u,p\rangle$. We also let $M_\R=M\otimes_\Z \R$ and $N_\R=N\otimes_\Z \R$ be the corresponding real vector spaces and we extend the duality to them.

A fan $\Sigma$ in $ N_\R$ is a finite collection of strictly convex rational polyhedral cones such that every face of $\sigma\in\Sigma$ is contained in $\Sigma$ and for all $\sigma,\sigma'\in\Sigma$ the intersection $\sigma\cap\sigma'$ is a face of each cone $\sigma,\sigma'$. We refer to strictly convex rational polyhedral cones simply as cones. A face $\tau$ of a cone $\sigma$ is denoted $\tau\prec\sigma$. We regard a cone as a fan by taking the set of all faces of $\sigma$. Moreover, if the dimension of a cone $\sigma$ is equal to the rank of $N$, we say that $\sigma$ is full-dimensional.
 
 Let $\Sigma$ be a fan in $N_\R$. A toric variety $X(\Sigma)$ is obtained from $\Sigma$ in the following way: for every cone $\sigma\in\Sigma$, we define an affine toric variety $X(\sigma)=\operatorname{Spec}\K[\sigma^{\vee}\cap M]$, where $\sigma^{\vee}$ is the dual cone of $\sigma$ in $M_{\R}$ and $\K[\sigma^{\vee}\cap M]$ is the semigroup algebra 
\[
\K[\sigma^{\vee}\cap M]=\bigoplus_{u\in\sigma^{\vee}\cap M}\K\cdot\chi^{u},\quad\mbox{with}\quad\chi^{0}=1,\mbox{ and }\chi^{u}\cdot\chi^{u'}=\chi^{u+u'},\ \forall u,u'\in\sigma^{\vee}\cap
M\,.
\]
We define the toric variety $X(\Sigma)$ associated with the fan $\Sigma$ as the variety obtained by gluing the family $\{X(\sigma)\mid\sigma\in\Sigma\}$ along the open embeddings $X(\sigma)\hookleftarrow X(\sigma\cap\sigma')\hookrightarrow X(\sigma')$ for all $\sigma,\sigma'\in\Sigma$. 

The set of $i$-dimensional cones of $\Sigma$ is denoted by $\Sigma(i)$. The 1-dimensional cones $\Sigma(1)$ of $\Sigma$ are called rays. A ray of $\rho\in\Sigma(1)$ is determined by its unique primitive vector. Hence, by abuse of notation, we consider that $\Sigma(1)$ is the set of primitive vectors of the rays of $\Sigma$. The $i$-skeleton of $\Sigma$, denoted by $\skel_i(\Sigma)$ is the set of faces of dimension less or equal than $i$, i.e., $\skel_i(\Sigma)=\bigcup_{j=0}^i \Sigma(i)$. The dimension of a fan $\Sigma$ in $N_\R$ is the largest integer $n$ such that $\Sigma(n)\neq\emptyset$. We always have $n=\dim\Sigma\leq \operatorname{rank}N$.

The support  $|\Sigma|$ of a fan $\Sigma$ in $N_\R$ is the union of the cones in $\Sigma$, i.e., $|\Sigma|=\bigcup_{\sigma\in \Sigma}\sigma$.  A refinement of $\Sigma$ is fan $\Sigma'$  such that $|\Sigma|=|\Sigma'|$ and for all $\sigma'\in \Sigma'$ there exists $\sigma\in \Sigma$ with $\sigma'\subset \sigma$. We denote a refinement by  $\Sigma' \prec_\textrm{ref} \Sigma$.

Given a cone $\sigma \subset N_{\R}$, we let $\mathcal{S}(\sigma)$ be the semigroup $\mathcal{S}(\sigma)=\sigma\cap N$. Its minimal generating set is denoted as $G(\sigma)$.
This set is usually called the Hilbert basis of $\sigma$.
Given a fan $\Sigma\subset N_\R$ we denote by $G(\Sigma)$ the union of all $G(\sigma)$ for $\sigma\in \Sigma$. 

A cone $\sigma$ is called simplicial if $\sigma(1)$ is part of an $\R$-basis of $N_\R$ and regular if $\sigma(1)$ is part of a $\Z$-basis of $N$.
A cone $\sigma\subset N_\R$ is regular if and only if $\sigma$ is simplicial and $G(\sigma)=\sigma(1)$ \cite[Proposition~11.1.8]{cox2011toric}. A fan $\Sigma$ is called regular (resp. simplicial) if every cone $\sigma\in \Sigma$ is regular (resp. simplicial). The toric variety $X(\Sigma)$ is smooth if and only if $\Sigma$ is regular.

A refinement  $\Sigma'\prec_{\mathrm{ref}}\Sigma$ induces a proper birational morphism  $X(\Sigma')\to X(\Sigma)$. A $G$-desin\-gu\-lar\-iza\-tion of a toric variety $X(\Sigma)$ is a proper birational morphism $X(\Sigma')\rightarrow X(\Sigma)$, where $\Sigma'$ is a refinement of $\Sigma$ that is regular and, moreover, $\Sigma'(1)=G(\Sigma)$. In this case $\Sigma'$ is called a regular $G$-refinement of $\Sigma$.

\section{G-stability}\label{sec:G-stability}

This section is devoted to the study of the notion of $G$-stability. We provide several examples, prove basic properties as well as the first theorem stated in the introduction.

Let $\sigma\subset N_\R$ be a cone. We denote $\Gamma_{+}(\sigma)=\operatorname{Conv}(\mathcal{S}(\sigma)\setminus \{0\})$. In addition, we denote as $\Gamma(\sigma)$ the union of the compact faces of the polyhedron $\Gamma_{+}(\sigma)$. Given $A\subset N$, the cone generated by $A$ in $N_\R$ is denoted by $\sigma_A$ or as $\operatorname{Cone}(A)$.

\begin{definition}\label{def:DCMGS}
A cone $\sigma\subset N_{\R}$ is said to be $G$-stable if 
\begin{enumerate}
 \item[$(i)$] $G(\sigma)=\Gamma(\sigma)\cap N$.
 \item[$(ii)$] $G(\sigma_A)=G(\sigma)\cap \sigma_A$ for every $A\subset G(\sigma)$.
\end{enumerate}
A fan $\Sigma$ in $N_{\R}$ is $G$-stable if every cone $\sigma$ in $\Sigma$ is $G$-stable.
\end{definition}

We present several examples in order to get familiarity with this notion. Some of these examples were obtained using the software Sagemath \cite{sagemath}.

\begin{example}\label{ex:surf-one-segment}
Let $\sigma=\con{(1,0),(1,n)}\subset\R^2$ for some $n\in\Z_{\geq1}$. Then $\sigma$ is $G$-stable. Indeed, the minimal generating set of $\mathcal{S}(\sigma)$ is $G(\sigma)=\{(1,0),(1,1),\ldots,(1,n)\}\subset\Z^2$. Moreover, $\Gamma_+(\sigma)=\sigma\cap(\R_{\geq1}\times\R)$ and $\Gamma(\sigma)$ is the segment in $\R^2$ joining $(1,0)$ and $(1,n)$. In particular, \cref{def:DCMGS} $(i)$ is satisfied. Let us verify condition $(ii)$. Let $A=\{(1,i_1),\ldots,(1,i_k)\}\subset G(\sigma)$. If $k=1$, condition $(ii)$ is verified. Assume $i_1<\cdots<i_k$, where $k\geq2$. Then $\sigma_A=\con{(1,i_1),(1,i_k)}$ and $G(\sigma_A)=\{(1,i_1),(1,i_1+1),\ldots,(1,i_k-1),(1,i_k)\}=G(\sigma)\cap\sigma_A$.
\end{example}

As we will later, all two-dimensional cones are $G$-stable (see \cref{prop:G-stable-dim-two}). Actually, the notion of $G$-stability is partly inspired by the description of minimal generating sets in the two-dimensional case. Indeed, in that case it is well-known that \cref{def:DCMGS} $(i)$ holds \cite[Proposition 1.21]{oda1983convex}.

\begin{example}\label{ex:no-c1}
This example exhibits a cone that does not satisfy \cref{def:DCMGS}~$(i)$. Denote  $\sigma=\con{(1,0,0), (0,1,0), (1,1,2)} \subset N_{\R}$. The minimal generating set of the corresponding semigroup is $G(\sigma)=\{(1,0,0), (0,1,0), (1,1,2), (1,1,1)\}.$ The polyhedron $\Gamma_{+}(\sigma)$ has a single compact face of dimension 2. It corresponds to the convex hull of  $\{(1,0,0), (0,1,0), (1,1,2)\}$. Then $G(\sigma) \neq \Gamma(\sigma) \cap N$, which implies that  $\sigma$  is not $G$-stable.
\end{example}
 
\begin{example}\label{ex:no-c2}
In this example we exhibit a cone satisfying \cref{def:DCMGS}~$(i)$ but not $(ii)$. Let $\sigma=\con{(1,0,0), (0,1,0), (1,1,2), (0,0,-1)}\subset N_{\R}$. The minimal generating set of the corresponding semigroup is $G(\sigma)=\{(1,0,0), (0,1,0), (1,1,2), (0,0,-1)\}$. The polyhedron $\Gamma_{+}(\sigma)$ possesses two compact faces, denoted as $F_1$ and $F_2$, of dimension 2. These faces are defined as the convex hull of the sets $\{(1,0,0), (1,1,2), (0,0,-1)\}$ and $\{(0,1,0), (1,1,2), (0,0,-1)\}$, respectively. Then $G(\sigma)=\Gamma(\sigma)\cap N$. 

Let $A:=\{(1,0,0), (0,1,0), (1,1,2)\}$, and consider the cone $\sigma_A$, generated by $A$. Observe that $G(\sigma_A)=A\cup\{(1,1,1)\}$.  
Hence, \cref{def:DCMGS}~$(ii)$ is not satisfied.
\end{example} 

\begin{example}\label{ex:polygon}
Let $P\subset\R^2$ be a polygon such that all of its vertices belong to $\Z^2$. Such a polygon is called a lattice polygone. Consider the following cone, $\omega_{P}=\con{(v,1)\mid v\in P}\subset\R^2\times\R$. We claim that $\omega_{P}$ is $G$-stable. Indeed, every lattice polygon can be triangulated in such a way that each triangle $T$ has as lattice points only its vertices \cite[Proposition 1.1]{haase2021existence}.
For each such $T$ we have that the three-dimensional cone $\sigma_T$ is regular. 
It follows that $G(\omega_{P})$ is contained in the plane $\R^2\times\{1\}$. Any subset of $G(\omega_{P})$ gives place to a cone of the form $ \sigma_{Q} $ with $ Q  $ a lattice polygon, hence the same argument applies for this subset and \cref{def:DCMGS}~$(ii)$ holds.
\end{example}

\begin{remark}\label{rem:empty_false}
The fact that if $T$ has no integer points other than its vertices, then the cone $\sigma_T$ is regular is only true in dimension 2. For example, $ T = \operatorname{Conv} \left\{  (1,0,0), (0,1,0), (0,0,1), (1,1,1)  \right\}$ contains no integral points other than its vertices, and $\sigma_T$ is not regular. Indeed, its Hilbert basis contains the primitive vectors of the four rays of $\omega_P$ plus the extra element $(1,1,1,2)$.
\end{remark}

Let us now prove some basic properties of $G$-stable cones.

\begin{proposition}\label{pr:sub-dcmgs}
Let $\sigma\subset N_{\R}$ be a $G$-stable cone and let $A\subset G(\sigma)$. Then  $\Gamma_{+}(\sigma_A)=\Gamma_{+}(\sigma)\cap \sigma_{A}$. Furthermore,   $\sigma_A$ is $G$-stable.
\end{proposition}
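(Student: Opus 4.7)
The inclusion $\Gamma_+(\sigma_A) \subseteq \Gamma_+(\sigma) \cap \sigma_A$ is immediate: $\sigma_A \subseteq \sigma$ gives $\mathcal{S}(\sigma_A) \subseteq \mathcal{S}(\sigma)$, hence $\Gamma_+(\sigma_A) \subseteq \Gamma_+(\sigma)$, and by definition $\Gamma_+(\sigma_A) \subseteq \sigma_A$.

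For the reverse inclusion, the plan is to use the general identity $\Gamma_+(\tau) = \operatorname{Conv}(G(\tau)) + \tau$, valid for any strongly convex cone $\tau$. This identity follows because every nonzero $v \in \mathcal{S}(\tau)$ can be written as $g + (v - g)$ with $g \in G(\tau)$ appearing in a Hilbert decomposition of $v$ (so that $v - g \in \mathcal{S}(\tau) \subseteq \tau$), combined with the fact that $\tau$ is the recession cone of $\Gamma_+(\tau)$. Equivalently, $x \in \Gamma_+(\tau)$ if and only if $x$ admits a nonnegative decomposition $x = \sum_g \mu_g g$ over $G(\tau)$ with $\sum_g \mu_g \geq 1$. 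Applying this identity to $\sigma_A$ and invoking condition~(ii) for $A$ yields $\Gamma_+(\sigma_A) = \operatorname{Conv}(G(\sigma)\cap\sigma_A) + \sigma_A$. Given $x \in \Gamma_+(\sigma) \cap \sigma_A$, the relation $x \in \sigma_A = \operatorname{Cone}(G(\sigma_A))$ already provides some nonnegative decomposition of $x$ in $G(\sigma_A)$; the task is to produce one whose coefficients sum to at least $1$. The strategy is to start from a $G(\sigma)$-decomposition of $x$ with $\sum \mu_g \geq 1$, split it into contributions from $G(\sigma) \cap \sigma_A$ and from $G(\sigma) \setminus \sigma_A$, and transfer the outside weight onto $G(\sigma_A)$. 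Here condition~(i) enters essentially: by pinning $G(\sigma)$ onto the compact faces of $\Gamma_+(\sigma)$, it guarantees that the outside weight can be absorbed into $G(\sigma) \cap \sigma_A$ without breaking the $\sum \geq 1$ condition.

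Once the equality is established, the $G$-stability of $\sigma_A$ is derived as follows. For condition~(ii) of $\sigma_A$: any $A' \subseteq G(\sigma_A)$ satisfies $A' \subseteq G(\sigma)$ by (ii) of $\sigma$ applied to $A$, so applying (ii) of $\sigma$ to $A'$ gives $G(\sigma_{A'}) = G(\sigma) \cap \sigma_{A'}$; intersecting with $\sigma_A \supseteq \sigma_{A'}$ yields $G(\sigma_{A'}) = G(\sigma_A) \cap \sigma_{A'}$. For condition~(i): using the set equality above, every face of $\Gamma_+(\sigma_A) = \Gamma_+(\sigma) \cap \sigma_A$ is the intersection of a face of $\Gamma_+(\sigma)$ with a face of $\sigma_A$, and compact faces of $\Gamma_+(\sigma_A)$ arise precisely from compact faces of $\Gamma_+(\sigma)$ cut by faces of $\sigma_A$. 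Combined with (i) of $\sigma$, this gives the inclusion $\Gamma(\sigma_A) \cap N \subseteq \Gamma(\sigma) \cap \sigma_A \cap N = G(\sigma) \cap \sigma_A = G(\sigma_A)$; conversely, each $g \in G(\sigma_A) \subseteq \Gamma(\sigma) \cap \sigma_A$ lies on some compact face $F$ of $\Gamma_+(\sigma)$, so $g \in F \cap \sigma_A$ is a point of a compact face of $\Gamma_+(\sigma_A)$, yielding $G(\sigma_A) \subseteq \Gamma(\sigma_A) \cap N$.

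The main obstacle is the weight-transfer step in the hard inclusion. As \cref{ex:no-c1,ex:no-c2} illustrate, without $G$-stability the Hilbert basis of $\sigma_A$ can acquire elements absent from $G(\sigma)$, so the desired redistribution of coefficients would be impossible; executing the transfer precisely using both (i) and (ii) is the technical heart of the argument.
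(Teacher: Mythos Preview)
Your overall structure is sound: the easy inclusion is immediate, the hard inclusion $\Gamma_+(\sigma)\cap\sigma_A\subseteq\Gamma_+(\sigma_A)$ is the crux, and once it is established your derivation of the $G$-stability of $\sigma_A$ is correct and essentially matches the paper. The identity $\Gamma_+(\tau)=\operatorname{Conv}(G(\tau))+\tau$ and its reformulation via decompositions of total weight $\geq 1$ are also correct.

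The gap is exactly where you flag it: the ``weight transfer'' is asserted but never executed. Starting from $x=\sum_{g\in G(\sigma)}\mu_g g$ with $\sum\mu_g\geq 1$, the outside piece $\sum_{g\notin\sigma_A}\mu_g g$ need not lie in $\sigma_A$ (it is merely a difference of two elements of $\sigma_A$), so there is no evident mechanism to absorb it into $G(\sigma)\cap\sigma_A$ while keeping the total weight $\geq 1$. Condition~(i) does not furnish a single linear functional equal to $1$ on all of $G(\sigma)$---different Hilbert-basis elements may lie on different compact faces---so one cannot argue that all decompositions of $x$ have the same total weight. Your sentence ``condition~(i)\dots guarantees that the outside weight can be absorbed'' is an assertion, not an argument, and you yourself label this step ``the technical heart'' still to be done.

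The paper bypasses this difficulty entirely by induction on $\dim\sigma_A$. The base case is a ray. For the inductive step one writes
\[
\Gamma_+(\sigma)\cap\sigma_A=\operatorname{Conv}\Big((G(\sigma)\cap\sigma_A^{0})\cup\bigcup_{\tau\prec\sigma_A}(\Gamma_+(\sigma)\cap\tau)\Big),
\]
notes that each proper face $\tau\prec\sigma_A$ equals $\sigma_{A_\tau}$ for some $A_\tau\subsetneq A$, applies the induction hypothesis to get $\Gamma_+(\sigma)\cap\tau=\Gamma_+(\tau)$, and uses condition~(ii) to identify $G(\sigma)\cap\sigma_A^{0}=G(\sigma_A)\cap\sigma_A^{0}$; the right-hand side then reassembles to $\Gamma_+(\sigma_A)$. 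No weight bookkeeping is required.
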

\begin{proof}
Let $\sigma\subset N_\R$ be a cone. We prove the first statement by induction on the dimension of $\sigma_A$. If $\dim \sigma_A=1$, then $A=\{\gamma\}\subset G(\sigma)$ and $\sigma_{A}=\operatorname{Cone}(\gamma)=\R_{\geq 0}\gamma$. In this case, $\Gamma_{+}(\sigma_A)=\Gamma_{+}(\sigma)\cap \sigma_{A}=\R_{\geq 1}\gamma$.

Let now $1\leq d_0< \dim\sigma$ and assume that for every subset $A\subseteq G(\sigma)$  with $\dim\sigma_{A}<d_0$ we have that $\Gamma_{+}(\sigma_A)=\Gamma_{+}(\sigma)\cap \sigma_{A}$. 
Let $A\subset G(\sigma)$ such that $\dim \sigma_{A} =d_0$. Observe that
\begin{align} \label{eq:1}
  \Gamma_{+}(\sigma)\cap \sigma_{A}=\operatorname{Conv}\left ( (G(\sigma)\cap \sigma_A^{0})\cup \bigcup_{\tau\prec \sigma_A} (\Gamma_{+}(\sigma)\cap \tau)\right )\,,
\end{align}
where $\sigma_A^0$ denotes the relative interior and $\tau\prec \sigma_A$ runs over all proper faces. Since $A$ is a generating set of $\sigma_A$, for every proper face $\tau\prec\sigma$ there exists  $A_\tau\subsetneq A$ such that $ \tau=\sigma_{A_\tau}$. Hence, by the induction hypothesis we have 
$\Gamma_{+}(\sigma)\cap \tau=\Gamma_{+}(\tau)$. Furthermore, intersecting with $\sigma_A^0$ the equality of \cref{def:DCMGS}~$(ii)$ we obtain  that $G(\sigma)\cap \sigma_A^{0}=G(\sigma_A)\cap \sigma_A^{0}$. Replacing both these last equalities  in \eqref{eq:1}, we obtain
$$\Gamma_{+}(\sigma)\cap \sigma_{A}=\operatorname{Conv} \left ( (G(\sigma_A)\cap \sigma_A^{0})\cup\bigcup_{\tau\prec \sigma_A} \Gamma_{+}(\tau)\right )=\Gamma_{+}(\sigma_{A})\,.$$
This proves the first statement of the proposition.

\medskip

We now prove that $\sigma_{A}$ is $G$-stable. From the first statement of the proposition, it follows directly that  $\Gamma(\sigma_A)=\Gamma(\sigma)\cap \sigma_{A}$. Hence, by \cref{def:DCMGS}~$(ii)$, we obtain
$$G(\sigma_A)=G(\sigma)\cap\sigma_A=\Gamma(\sigma)\cap N\cap\sigma_A =\Gamma(\sigma_A)\cap N,$$
which proves \cref{def:DCMGS}~$(i)$. Let now $A'\subset G(\sigma_A)\subset G(\sigma)$. Since $\sigma$ is $G$-stable, we have that
$$G(\sigma_{A'})=G(\sigma)\cap\sigma_{A'}=G(\sigma)\cap \sigma_{A}\cap \sigma_{A'}=G(\sigma_A)\cap \sigma_{A'}\,,$$
which proves \cref{def:DCMGS}~$(ii)$ concluding the proof.
\end{proof}

\begin{corollary}\label{co:sub-dcmgs}
If $\sigma\subset N_{\R}$ is a $G$-stable cone and $\sigma'\preceq \sigma$  is a face, then $\sigma'$ is a $G$-stable cone.
\end{corollary}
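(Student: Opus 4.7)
The plan is to reduce this corollary to Proposition \ref{pr:sub-dcmgs}, which already establishes that $\sigma_A$ is $G$-stable for every $A\subset G(\sigma)$. The only thing left is therefore to realize the face $\sigma'\preceq\sigma$ as a cone of the form $\sigma_A$ with $A\subset G(\sigma)$.

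First I would take $A := \sigma'(1)$, the set of primitive vectors of the rays of $\sigma'$. Since $\sigma'$ is a face of $\sigma$, every ray of $\sigma'$ is a ray of $\sigma$, so $A\subset \sigma(1)$. As $G(\sigma)$ is the minimal generating set of the semigroup $\mathcal{S}(\sigma)=\sigma\cap N$, it must contain every primitive ray vector of $\sigma$ (such a primitive vector is indecomposable in $\mathcal{S}(\sigma)$), so in fact $A\subset G(\sigma)$. On the other hand, because $\sigma'$ is a strongly convex rational polyhedral cone, it equals the cone generated by its rays, i.e., $\sigma_A = \sigma'$.

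Having written $\sigma' = \sigma_A$ with $A\subset G(\sigma)$, the conclusion is immediate from \cref{pr:sub-dcmgs}: that proposition asserts that $\sigma_A$ is $G$-stable. Therefore $\sigma'$ is $G$-stable, as claimed.

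I do not anticipate any real obstacle here; the only subtlety to verify is that $G(\sigma)$ contains the primitive vectors of all rays of $\sigma$, which is standard (an element of a ray that is not primitive is the sum of two nonzero lattice points of $\mathcal{S}(\sigma)$, hence not in any minimal generating set). Everything else is a direct invocation of the preceding proposition.
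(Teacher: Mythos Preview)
Your proposal is correct and is exactly the intended argument: the paper states the corollary immediately after Proposition~\ref{pr:sub-dcmgs} without proof, treating it as an immediate consequence, and your reduction via $A=\sigma'(1)\subset\sigma(1)\subset G(\sigma)$ is precisely how one realizes a face as a $\sigma_A$.
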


We now prove that the $G$-stability of fans is preserved under certain refinements.

\begin{proposition}\label{pr:ref-Gstable}
Let $\Sigma$ be a $G$-stable fan and $\Sigma'$ a refinement of $\Sigma$ such that $\Sigma'(1)\subset G(\Sigma)$. Then $\Sigma'$ is $G$-stable and $G(\Sigma')=G(\Sigma)$.    
\end{proposition}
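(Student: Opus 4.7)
The plan is to show that every cone of $\Sigma'$ arises as $\sigma_A$ for some $A \subset G(\sigma)$ with $\sigma \in \Sigma$, and then invoke \cref{pr:sub-dcmgs} to transfer $G$-stability from $\Sigma$ to $\Sigma'$. Once this is done, the equality $G(\Sigma')=G(\Sigma)$ follows by comparing Hilbert bases cone by cone.

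First I would record the well-known face property of Hilbert bases: if $\mu\preceq\tau$ is a face, then $G(\mu)=G(\tau)\cap\mu$. This is standard: any decomposition $w=w_1+w_2$ inside $\tau$ of an element $w\in\mu$ forces $w_1,w_2\in\mu$ because $\mu$ is cut out by a supporting hyperplane of $\tau$.

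Next, fix $\sigma'\in\Sigma'$ and choose $\sigma\in\Sigma$ with $\sigma'\subset\sigma$ (it exists because $\Sigma'$ refines $\Sigma$). I would like to show $\sigma'(1)\subset G(\sigma)$. The subtle point here—and what I expect to be the main obstacle—is that the hypothesis $\Sigma'(1)\subset G(\Sigma)$ only tells us that each primitive generator $v$ of a ray of $\sigma'$ lies in $G(\tau)$ for \emph{some} $\tau\in\Sigma$, not necessarily for $\sigma$ itself. To remedy this, I observe that $v\in\sigma\cap\tau$, and $\sigma\cap\tau$ is a face of both $\sigma$ and $\tau$. Applying the face property twice yields
\[
v\in G(\tau)\cap(\sigma\cap\tau)=G(\sigma\cap\tau)=G(\sigma)\cap(\sigma\cap\tau)\subset G(\sigma).
\]
Hence $\sigma'(1)\subset G(\sigma)$. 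Setting $A=\sigma'(1)$ we get $\sigma'=\sigma_A$, so \cref{pr:sub-dcmgs} implies $\sigma'$ is $G$-stable and $G(\sigma')=G(\sigma)\cap\sigma'$. As $\sigma'\in\Sigma'$ was arbitrary, $\Sigma'$ is $G$-stable.

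Finally, I would show $G(\Sigma')=G(\Sigma)$. The inclusion $G(\Sigma')\subset G(\Sigma)$ is immediate from $G(\sigma')=G(\sigma)\cap\sigma'\subset G(\sigma)$ for the $\sigma\in\Sigma$ containing $\sigma'$. For the reverse inclusion, pick $v\in G(\sigma)$ for some $\sigma\in\Sigma$. Since $\sigma$ is partitioned by the cones of $\Sigma'$ refining it, $v$ lies in some $\sigma'\in\Sigma'$ with $\sigma'\subset\sigma$. If $v=v_1+v_2$ with $v_1,v_2\in(\sigma'\cap N)\setminus\{0\}$, then the same decomposition takes place in $\sigma\cap N$, contradicting $v\in G(\sigma)$. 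Therefore $v\in G(\sigma')\subset G(\Sigma')$, completing the proof.
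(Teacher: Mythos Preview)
Your proof is correct and follows essentially the same route as the paper: identify each $\sigma'\in\Sigma'$ as $\sigma_A$ for $A=\sigma'(1)\subset G(\sigma)$, invoke \cref{pr:sub-dcmgs}, and then compare Hilbert bases. You are in fact more careful than the paper at one point: where the paper simply asserts that $\sigma'(1)\subset G(\Sigma)$ implies $\sigma'(1)\subset G(\sigma)$, you justify this via the face property $G(\mu)=G(\tau)\cap\mu$ applied to the common face $\sigma\cap\tau$, which is exactly the right patch.
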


\begin{proof}
We first prove that $\Sigma'$ is $G$-stable, i.e., that every cone in $\Sigma'$ is $G$-stable. Let $\sigma'\in \Sigma'$. Since $\Sigma'$ is a refinement of $\Sigma$, there exists $\sigma\in \Sigma$ with $\sigma'\subset \sigma$. Moreover, the primitive vectors $\sigma'(1)$ of the rays of $\sigma'$ are contained in $G(\Sigma)$ and so in $G(\sigma)$. Since $\sigma'=\operatorname{Cone}(\sigma'(1))$ we have that $\sigma'$ is $G$-stable by \cref{pr:sub-dcmgs}.

We now prove that $G(\Sigma')=G(\Sigma)$. By \cref{def:DCMGS}~$(ii)$ we have $G(\sigma')= G(\sigma)\cap \sigma'$ for each $\sigma'\in\Sigma'$ and $\sigma\in\Sigma$ such that $\sigma'\subset\sigma$. Hence,
$$\bigcup_{\sigma'\in \Sigma'}G(\sigma')= \bigcup_{\sigma \in \Sigma}\left (\bigcup_{\sigma'\subset \sigma}G(\sigma')\right )=\bigcup_{\sigma \in \Sigma}\left (\bigcup_{\sigma'\subset \sigma}(G(\sigma)\cap\sigma')\right )=\bigcup_{\sigma\in \Sigma}G(\sigma)\,.$$
This proves the lemma.
\end{proof}

\begin{corollary}
Let $\Sigma_1$  be a $G$-stable fan, and consider a chain of refinements:
$$\Sigma_m\prec_\text{\rm ref} \dots \prec_\text{\rm ref} \Sigma_1, \quad\mbox{with}\quad \Sigma_m(1)\subset G(\Sigma_1)\,.$$
Then the fans $\Sigma_i$ are $G$-stable, and $G(\Sigma_i)=G(\Sigma_1)$ for all $1\leq i\leq m$.
\end{corollary}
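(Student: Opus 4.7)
The plan is a straightforward induction on $i$, with \cref{pr:ref-Gstable} as the engine. The base case $i=1$ is just the hypothesis that $\Sigma_1$ is $G$-stable and the tautology $G(\Sigma_1)=G(\Sigma_1)$. For the inductive step, assuming $\Sigma_i$ is $G$-stable with $G(\Sigma_i)=G(\Sigma_1)$, I want to apply \cref{pr:ref-Gstable} to the single refinement $\Sigma_{i+1}\prec_{\textrm{ref}}\Sigma_i$, which immediately yields that $\Sigma_{i+1}$ is $G$-stable and $G(\Sigma_{i+1})=G(\Sigma_i)=G(\Sigma_1)$. The only nontrivial thing to verify is the hypothesis of that proposition, namely $\Sigma_{i+1}(1)\subset G(\Sigma_i)$.

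To get this, I would first record the general observation that a refinement can only add rays: if $\Sigma'\prec_{\textrm{ref}}\Sigma$ then $\Sigma(1)\subset\Sigma'(1)$. Indeed, for $\rho\in\Sigma(1)$, the one-dimensional cone $\rho$ is covered by cones of $\Sigma'$ lying inside it, and any cone of $\Sigma'$ contained in $\rho$ is either $\{0\}$ or a ray; since rays in $N_\R$ contained in $\rho$ coincide with $\rho$ itself, $\rho$ must be a cone of $\Sigma'$. Applying this to each step of the chain gives
\[
\Sigma_1(1)\subset \Sigma_2(1)\subset \cdots \subset \Sigma_m(1)\subset G(\Sigma_1),
\]
where the last inclusion is the hypothesis. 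In particular $\Sigma_{i+1}(1)\subset G(\Sigma_1)$, and by the inductive hypothesis $G(\Sigma_1)=G(\Sigma_i)$, so $\Sigma_{i+1}(1)\subset G(\Sigma_i)$ as required.

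There is no real obstacle here: once the monotonicity of rays under refinement is in hand, the induction is automatic. The main thing is to be explicit about why \cref{pr:ref-Gstable} can be applied at every step (the hypothesis of that proposition is about $\Sigma_{i+1}(1)\subset G(\Sigma_i)$, not $G(\Sigma_1)$), which is exactly what the combination of the ray inclusion and the inductive identity $G(\Sigma_i)=G(\Sigma_1)$ delivers.
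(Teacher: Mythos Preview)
Your proof is correct and follows essentially the same approach as the paper: both rely on the chain of ray inclusions $\Sigma_1(1)\subset\Sigma_2(1)\subset\cdots\subset\Sigma_m(1)\subset G(\Sigma_1)$ and then invoke \cref{pr:ref-Gstable}. The only cosmetic difference is that you carry out an explicit induction along the chain, applying the proposition to each step $\Sigma_{i+1}\prec_{\textrm{ref}}\Sigma_i$, whereas the paper's one-line proof applies it directly to $\Sigma_i\prec_{\textrm{ref}}\Sigma_1$ (using transitivity of refinement); your justification of the ray monotonicity is a welcome bit of extra care.
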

\begin{proof}
Since $\Sigma_1(1)\subset \Sigma_2(1)\subset \dots \subset  \Sigma_m(1)\subset G(\Sigma_1)$, the corollary follows from \cref{pr:ref-Gstable}.
\end{proof}

\subsection{G-desingularization}\label{subsec:G-desingularization}

The aim of this section is to prove the following theorem.

\begin{theorem}\label{th:G-des}
Let $\Sigma$ be a $G$-stable fan in $N_{\R}$. Then $X(\Sigma)$ admits a $G$-desin\-gu\-lar\-iza\-tion.
\end{theorem}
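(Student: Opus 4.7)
The plan is to build a regular refinement $\Sigma'$ of $\Sigma$ with $\Sigma'(1)=G(\Sigma)$ cone by cone, via compatible triangulations of the compact polyhedral complexes $\Gamma(\sigma)$. For each $\sigma\in\Sigma$ I would choose a triangulation $\mathcal{T}_\sigma$ of $\Gamma(\sigma)$ that refines its facial structure and whose vertex set is all of $G(\sigma)=\Gamma(\sigma)\cap N$, the latter equality coming from \cref{def:DCMGS}~$(i)$. To make the various $\mathcal{T}_\sigma$ agree on common faces of $\Sigma$, one can lift the finite set $G(\Sigma)$ by a sufficiently generic height function $h\colon G(\Sigma)\to\R$ and take the coherent lower-envelope triangulation it induces on each $\Gamma(\sigma)$; a standard perturbation argument arranges $h$ so that every element of $G(\sigma)$ actually becomes a vertex of $\mathcal{T}_\sigma$ (not just a lattice point in some simplex). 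The candidate fan $\Sigma'$ will then be the collection of cones $\sigma_\Delta$ for $\Delta\in\mathcal{T}_\sigma$, ranging over $\sigma\in\Sigma$, together with all of their faces.

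The heart of the argument is to show that each top-dimensional cone $\sigma_\Delta$ is regular. Since $\Delta$ is a geometric simplex contained in a single compact face of $\Gamma_+(\sigma)$, which lies in an affine hyperplane missing the origin, its vertices are affinely, and therefore linearly, independent in $N_\R$, so $\sigma_\Delta$ is simplicial. Using that $\Gamma(\sigma)$ is star-shaped as seen from the origin (each ray from $0$ into $\sigma$ meets $\Gamma(\sigma)$ in a unique point), a short radial argument yields $\sigma_\Delta\cap\Gamma(\sigma)=\Delta$. Now applying $G$-stability~$(ii)$ to $A$ equal to the vertex set of $\Delta$, together with $(i)$, one obtains
\[
G(\sigma_\Delta)\;=\;G(\sigma)\cap\sigma_\Delta\;=\;G(\sigma)\cap\Delta\;=\;\sigma_\Delta(1),
\]
where the last equality uses that $\mathcal{T}_\sigma$ has \emph{all} of $G(\sigma)$ as vertices, so that no element of $G(\sigma)$ can lie in the relative interior of $\Delta$. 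A simplicial cone with $G(\sigma_\Delta)=\sigma_\Delta(1)$ is regular by the criterion recalled in \cref{sec:toric}, so $\sigma_\Delta$ is regular as required.

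It then remains to verify that the collection of all such $\sigma_\Delta$ assembles to a fan refining $\Sigma$ with $\Sigma'(1)=G(\Sigma)$: compatibility of the $\mathcal{T}_\sigma$ on faces produces the fan axioms, the refinement property is immediate from the construction, and the choice of vertex sets yields $\Sigma'(1)=G(\Sigma)$. I expect the principal obstacle to lie in the very first step, namely arranging the triangulations $\mathcal{T}_\sigma$ so that they simultaneously refine the facial structure of each $\Gamma(\sigma)$, are mutually compatible on intersections $\sigma\cap\sigma'$, and use every element of $G(\sigma)$ as a vertex. Once such triangulations are in place, the regularity claim is essentially a direct unwinding of the two $G$-stability axioms in combination with \cref{pr:sub-dcmgs}.
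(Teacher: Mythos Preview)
Your approach is correct and takes a genuinely different route from the paper's. The paper does not triangulate $\Gamma(\sigma)$; instead it first reduces to a simplicial fan and then runs an inductive star-subdivision procedure skeleton by skeleton. At stage $i$ one picks a non-regular $(i{+}1)$-cone $\sigma$ with maximal Hilbert-basis cardinality, star-subdivides at some $\gamma\in G(\sigma)\setminus\sigma(1)$ (which lies in the relative interior of $\sigma$ because the $i$-skeleton is already regular), re-regularizes the lower skeleta by the inductive hypothesis, and controls termination via a lexicographic invariant $(M(i{+}1,\cdot),N(i{+}1,\cdot))$ counting how far the $(i{+}1)$-skeleton is from regularity. $G$-stability enters only through \cref{pr:ref-Gstable}, which guarantees that every intermediate fan stays $G$-stable with the same $G(\Sigma)$.

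Your argument is more direct and uses both $G$-stability axioms more transparently: once the triangulation is fixed, the chain $G(\sigma_\Delta)=G(\sigma)\cap\sigma_\Delta=G(\sigma)\cap\Delta=\sigma_\Delta(1)$ gives regularity of every maximal cone in one stroke, with no induction on skeleta. The trade-off is that all the work is pushed into the existence of the triangulation, which the paper's method avoids entirely. One caution on that step: ``generic $h$'' is not the right condition for forcing every point of $G(\sigma)$ to appear as a vertex of the lower-envelope subdivision---a point lifted too high is simply discarded, regardless of genericity. What you want is $h$ the restriction of a strictly convex function on $N_\R$ (e.g.\ $h(g)=\lVert g\rVert^2$, then perturb to break ties), or alternatively a placing/pulling triangulation in a fixed total order on $G(\Sigma)$; either choice makes every lattice point a vertex and is automatically compatible across faces. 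With that adjustment your construction goes through.
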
 

The general strategy to prove this result is the following. Firstly, using simplicial subdivisions and \cref{pr:ref-Gstable}, we can assume that the fan is simplicial. Let now $\Sigma$ be a simplicial $G$-stable fan of dimension $n=\operatorname{rank} N$. We will show that there exists a chain of simplicial refinements
\begin{align} \label{eq:refinement}
 \Sigma_n \prec_\textrm{ref} \dots \prec_\textrm{ref} \Sigma_i\prec_\textrm{ref} \dots  \prec_\textrm{ref} \Sigma_1=\Sigma\,,
\end{align}
such that for each  $1\leq i\leq n$ we have:
\begin{enumerate}
\item[$(i)$] $\Sigma_i(1)\subseteq G(\Sigma)$.
\item[$(ii)$] Every $\sigma\in \skel_i(\Sigma_i)$ is regular.
\item[$(iii)$] If $i>1$ and $\sigma\in \skel_i(\Sigma_{i-1})$ is regular, then $\sigma\in  \skel_i(\Sigma_{i})$.
\end{enumerate}
The last condition ensures that regular cones of dimensions smaller or equal to $i$ are not refined. Observe that $\Sigma_n$ is indeed regular. This will provide that the $G$-stable fan $\Sigma$ admits a regular refinement $\Sigma_n$ such that $\Sigma_n(1)=G(\Sigma)$, yielding that $X(\Sigma_n)\to X(\Sigma)$ is a $G$-desingularization. In the remaining of this section we construct  the refinement  \eqref{eq:refinement}.

\medskip

Let us define certain numerical quantities that will intervene in our proof of existence of the refinement \eqref{eq:refinement}. Let $\Sigma$ be a simplical fan in $N_\R$ and let $i\geq 1$ be an integer. Consider the following numbers ($\#A$ denotes the cardinality of a set $A$):
\begin{align*}
M(i,\Sigma)&=\max\big\{ \#G(\sigma) \mid \sigma\in \Sigma(i) \big\}-i, \\
N(i, \Sigma)&=\#\left\{\sigma\in \Sigma(i)\mid \#G(\sigma)-i\geq \max\{M(i,\Sigma),1\}\right\}.
\end{align*}
As a simple example, notice that $M(1,\Sigma)=0$ and $N(1,\Sigma)=0$.

\begin{remark}\label{rem: M()-N()}
Since $\Sigma$ is a simplicial fan, for each cone $\sigma \in \Sigma$, we have $\#\sigma(1) = \dim \sigma$. Additionally, $\sigma \in \Sigma$ is regular if and only if $\sigma(1) = G(\sigma)$. Using these two facts it follows directly that all cones of $\Sigma$ of dimension $i$ are regular if and only if $M(i, \Sigma) = 0$. Similarly, all cones of $\Sigma$ of dimension $i$ are regular if and only if $N(i, \Sigma) = 0$. Furthermore, whenever $M(i,\Sigma)\geq 1$, the quantity $N(i,\Sigma)$ counts the number of cones $\sigma\in \Sigma$ realizing the maximum in the definition of $M(i,\Sigma)$.
\end{remark}

Fix $n\in\N_{\geq1}$. Let $\Omega=\{G\mbox{-stable simplicial fans of dimension }n \mbox{ in }N_{\R}\}$. Let $i_0$ be the largest integer which is smaller or equal than $n$ and such that for all $\Sigma\in\Omega$, there exists a sequence of length $i_0$ of simplicial refinements
\begin{align} \label{eq:refinement2}
 \Sigma_{i_0} \prec_\textrm{ref} \dots \prec_\textrm{ref} \Sigma_i\prec_\textrm{ref} \dots  \prec_\textrm{ref} \Sigma_1=\Sigma\,,
\end{align}
such that for each  $1\leq i\leq i_0$ we have:
\begin{enumerate}
\item[$(i)$] $\Sigma_i(1)\subseteq G(\Sigma)$.
\item[$(ii)$] Every $\sigma\in \skel_i(\Sigma_i)$ is regular.
\item[$(iii)$] If $i>1$ and $\sigma\in \skel_i(\Sigma_{i-1})$ is regular, then $\sigma\in  \skel_i(\Sigma_{i})$.
\end{enumerate}
Notice that $i_0\geq 1$ since $\Sigma_1=\Sigma$ satisfies $(i)$, $(ii)$ and $(iii)$ above. Notice also that $\Sigma_i$ is $G$-stable for each $1\leq i\leq i_0$ by \cref{pr:ref-Gstable}.

The following is the main technical lemma that will intervene in the proof of \cref{th:G-des}.

\begin{lemma} \label{lm:induction}
Let $\Sigma$ be a $G$-stable simplicial fan of dimension $n$ and let $i\leq \min \{i_0,n-1\}$ be such that all cones of $\skel_{i}(\Sigma)$ are regular and  $M(i+1,\Sigma)>0$. Then there exists a simplicial refinement $\Sigma'$ of $\Sigma$ satisfying the following properties: 
\begin{enumerate}
\item[$(a)$] $\Sigma'(1)\subseteq G(\Sigma)$. In particular, $\Sigma'$ is $G$-stable.
\item[$(b)$] All cones in $\skel_{i}(\Sigma')$  are regular.
\item[$(c)$] If $\sigma\in \skel_{i+1}(\Sigma)$ is regular, then $\sigma\in \skel_{i+1}(\Sigma')$.
\item[$(d)$] $(M(i+1,\Sigma'), N(i+1,\Sigma'))<_{\textrm{lex}}(M(i+1,\Sigma), N(i+1,\Sigma))$, where $<_\textrm{lex}$ is the lexicographic order.
\end{enumerate}
\end{lemma}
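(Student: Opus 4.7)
The plan is to take $\Sigma'$ to be the star subdivision of $\Sigma$ at a lattice vector $v \in G(\Sigma)$ lying in the relative interior of a carefully chosen ``maximizer'' cone. Since $M(i+1,\Sigma) \geq 1$, I would begin by picking $\sigma \in \Sigma(i+1)$ with $\#G(\sigma) - (i+1) = M(i+1,\Sigma)$; this cone is non-regular. The $G$-stability of $\Sigma$ combined with the regularity of $\skel_i(\Sigma)$ forces every proper face $\tau \prec \sigma$ to satisfy $G(\tau) = \tau(1) = G(\sigma) \cap \tau$, so $G(\sigma) \cap \partial\sigma = \sigma(1)$ and hence $G(\sigma) \setminus \sigma(1) \subseteq \sigma^{\circ}$. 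Out of this nonempty set I would select $v$ with an extremality condition, for example as a vertex of the compact polyhedron $\Gamma(\sigma)$ (whose lattice points are precisely $G(\sigma)$ by \cref{def:DCMGS}~$(i)$). The subdivision $\Sigma'$ then replaces every cone $\tau$ with $\sigma \preceq \tau$ by the simplicial cones $\operatorname{Cone}((\tau(1) \setminus \{\rho\}) \cup \{v\})$ for $\rho \in \sigma(1)$, together with all their faces.

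Properties (a) and (c) are the easy parts. Since $v \in G(\sigma) \subseteq G(\Sigma)$, condition (a) is immediate, and \cref{pr:ref-Gstable} then upgrades $\Sigma'$ to a $G$-stable fan. For (c), any other $(i+1)$-dim cone $\sigma' \neq \sigma$ of $\Sigma$ cannot contain $\sigma$ as a face by dimension reasons, so $\sigma' \notin \operatorname{star}(\sigma)$ and is preserved in $\Sigma'$. For (d), the replacement cones $\tau_j = \operatorname{Cone}((\sigma(1) \setminus \{\rho_j\}) \cup \{v\})$ satisfy $G(\tau_j) = G(\sigma) \cap \tau_j$ by $G$-stability of $\Sigma'$, and beyond the $i+1$ rays of $\tau_j$ only the $M(i+1,\Sigma) - 1$ elements of $(G(\sigma) \setminus \sigma(1)) \setminus \{v\}$ can contribute, so $\#G(\tau_j) - (i+1) \leq M(i+1,\Sigma) - 1$. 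Combined with a parallel $G$-stability bookkeeping for the new $(i+1)$-dim faces produced by subdividing higher-dimensional cones $\tau \supset \sigma$, this should yield either a strict drop of $M$ or, when $M$ is unchanged, a strict drop of $N$.

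The main obstacle is property (b). The new cones in $\skel_i(\Sigma')$ are exactly those containing $v$; any such is of the form $\operatorname{Cone}(\{v\} \cup A)$ for some subset $A$ of rays of a cone in $\operatorname{star}(\sigma)$. By $G$-stability its Hilbert basis equals the intersection of the ambient Hilbert basis with this cone, so regularity is equivalent to no stray lattice point of the Hilbert basis lying strictly inside $\operatorname{Cone}(\{v\} \cup A)$. The vertex/extremality condition on $v$ is designed precisely to rule out such stray points, since any lattice point strictly inside $\operatorname{Cone}(\{v\} \cup A)$ would force a convex decomposition contradicting the extremality of $v$ on $\Gamma(\sigma)$. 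The technical core of the proof will be to verify this exclusion for all the new lower-dimensional cones — including those arising from subdivisions of higher-dim cones of $\Sigma$ — where the interplay between $G$-stability, regularity of $\skel_i(\Sigma)$, and extremality of $v$ is most delicate.
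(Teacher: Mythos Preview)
Your plan diverges from the paper's proof at the crucial point, and the divergence leaves a real gap. You attempt to secure property~(b) by making a \emph{single} star subdivision at a cleverly chosen $v$, hoping an extremality condition on $v$ (``vertex of $\Gamma(\sigma)$'') will prevent any non-regular cones of dimension $\le i$ from appearing. There are two problems. First, the extremality condition is not well-posed: for a simplicial cone $\sigma$ the vertices of $\Gamma_+(\sigma)$ are typically just the primitive ray generators $\sigma(1)$, so there need not be any vertex of $\Gamma(\sigma)$ in $\sigma^{\circ}$ to choose. (Take for instance the $G$-stable cone with rays $(1,0,0),(0,1,0),(2,2,3)$ and Hilbert basis containing $(1,1,1)$: the compact part $\Gamma(\sigma)$ is a single triangle whose only vertices are the three ray generators.) Second, and more seriously, even granting some workable extremality notion inside $\sigma$, the star subdivision also touches every $\tau\in\Sigma$ with $\sigma\prec\tau$, producing new cones $\operatorname{Cone}(\{v\}\cup A)$ with $A$ drawn from $\tau(1)$. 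By $G$-stability their Hilbert bases are $G(\tau)\cap\operatorname{Cone}(\{v\}\cup A)$, and elements of $G(\tau)\setminus G(\sigma)$ can land there; nothing about the position of $v$ inside $\sigma$ controls this. So you cannot in general guarantee~(b) after one subdivision.

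The paper does not try to. It picks an \emph{arbitrary} $\gamma\in G(\sigma)\setminus\sigma(1)$, performs the star subdivision $\widehat{\Sigma}=\Sigma(\gamma)$, and explicitly acknowledges that $\widehat{\Sigma}$ may acquire non-regular cones of dimension $\le i$. This is exactly where the hypothesis $i\le i_0$ --- which your plan never invokes --- enters: since $\widehat{\Sigma}$ is again $G$-stable and simplicial, the defining property of $i_0$ furnishes a further chain $\widehat{\Sigma}_i\prec_{\mathrm{ref}}\cdots\prec_{\mathrm{ref}}\widehat{\Sigma}_1=\widehat{\Sigma}$ (refining only inside $\sigma$) that restores regularity of the $i$-skeleton. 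One then sets $\Sigma'=\widehat{\Sigma}_i$. Properties~(a),~(b),~(c) follow from this secondary refinement, and your argument for~(d) (that every new $(i{+}1)$-cone inside $\sigma$ has strictly smaller Hilbert basis via $G$-stability, because it misses at least one ray of $\sigma$) is essentially the paper's. The missing idea in your plan is precisely this use of the inductive parameter $i_0$ to repair the $i$-skeleton after the subdivision, rather than trying to avoid damaging it.
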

\begin{proof}
Let $\sigma\in \Sigma(i+1)$ be a cone realizing the maximum in the definition of $M(i+1,\Sigma)$, i.e., a cone such that $\#G(\sigma)=M(i+1, \Sigma)+i+1>i+1$. Hence, $\sigma$ is a simplicial but non-regular cone. This yields that $G(\sigma)\setminus \Sigma(1)\neq \emptyset$. Let $\tau\prec\sigma$ be a proper face of $\sigma$. The cone $\tau$ is regular since, by hypothesis, all cones in $\skel_i(\Sigma)$ are regular. Then $G(\sigma)\setminus \sigma(1)$ is contained in the relative interior $\sigma^0$ of $\sigma$. Letting now $\gamma\in G(\sigma)\setminus\sigma(1)$ we let $\widehat{\Sigma}=\Sigma(\gamma)$ be the refinement given by the star subdivision of the fan $\Sigma$ at $\gamma$ \cite[Lemma~11.1.3]{cox2011toric}. In particular, $\widehat{\Sigma}$ is simplicial.

Remark first that  $\widehat{\Sigma}(1)=\Sigma(1)\cup\{\gamma\}\subset G(\Sigma)$. Hence, by \cref{pr:ref-Gstable}, we conclude that $\widehat{\Sigma}$ is $G$-stable. Moreover, since $\gamma$ is contained in the relative interior of the cone $\sigma$, every cone in $\Sigma$ different from $\sigma$ remains unchanged in $\widehat{\Sigma}$. In particular, $\widehat{\Sigma}\prec_\textrm{ref}\Sigma$ does not refine regular cones in $\Sigma$. However, the refinements of $\sigma$ introduced by the star subdivision may produce simplicial non-regular cones of dimension less or equal than $i$ in $\widehat{\Sigma}$. Since $i\leq i_0$, by the definition of $i_0$, there exists a chain of $G$-stable simplicial refinements of $\widehat{\Sigma}$ given by
$$\widehat{\Sigma}_i\prec_\text{\rm ref} \dots \prec_\text{\rm ref} \widehat{\Sigma}_j \prec_\text{\rm ref}\dots \prec_\text{\rm ref} \widehat{\Sigma}_{1}=\widehat{\Sigma}\,,$$ 
satisfying   $(i)$, $(ii)$ and $(iii)$ in the definition of $i_0$ in \eqref{eq:refinement2}. Moreover, we can impose that only cones with support in $\sigma$ are refined in the above chain.

Let $\Sigma'=\widehat{\Sigma}_i$. Now,  $\Sigma'\prec_\textrm{ref}\Sigma$ is the refinement claimed to exist in the lemma. 
By $(i)$ we have that $\Sigma'(1)\subseteq G(\widehat{\Sigma})=G(\Sigma)$, where the last equality follows from \cref{pr:ref-Gstable}. This yields $(a)$. By $(ii)$ we have that $(b)$ holds. As remarked above, no smooth cone of $\Sigma$ is refined at any step implying  $(c)$. 

Let us prove $(d)$. Firstly, notice that for every cone $\sigma'\in \Sigma'(i+1)$ with $\sigma'\subsetneq\sigma$ we have that $G(\sigma')=G(\sigma)\cap \sigma'$ by the $G$-stability of $\Sigma$. By the refinement made to $\sigma$ it follows that $\#G(\sigma')<\#G(\sigma)$. Suppose that $N(i+1,\Sigma)\geq 2$, i.e., there are at least two $(i+1)$-dimensional cones in $\Sigma$ realizing the maximum in the definition of $M(i+1,\Sigma)$. This implies that $M(i+1,\Sigma)=M(i+1,\Sigma')$ and $N(i+1,\Sigma')=N(i+1,\Sigma)-1$. Hence, $(M(i+1,\Sigma'),N(i+1,\Sigma'))<_{lex}(M(i+1,\Sigma),N(i+1,\Sigma))$. On the other hand, if $N(i+1,\Sigma)=1$ then there are no cones left realizing the maximum and so $M(i+1,\Sigma')<M(i+1,\Sigma)$. This yields $(d)$.
\end{proof}

\begin{proof}[Proof of \cref{th:G-des}]
Let $\Sigma$ be a $G$-stable fan of dimension $n$. If $\Sigma$ is not simplicial, there exists a simplicial refinement $\widehat{\Sigma}$ with $\widehat{\Sigma}(1)=\Sigma(1)$ \cite[Proposition 11.1.7]{cox2011toric}. Hence, $\widehat{\Sigma}$ is also $G$-stable and $G(\Sigma)=G(\widehat{\Sigma})$ by \cref{pr:ref-Gstable}. Since a $G$-desingularization of $\widehat{\Sigma}$ provides a $G$-desingularization of $\Sigma$, without loss of generality we assume that $\Sigma$ is simplicial in the sequel.

To continue with the proof, recall the definition of $i_0$ given in \eqref{eq:refinement2}. In what follows we assume that $\Sigma$ is a $G$-stable simplicial fan whose longest sequence of refinements satisfying \eqref{eq:refinement2} is of length $i_0$. We are going to show that $i_0=n$ for this particular $\Sigma$. By definition of $i_0$, this provides the existence of a $G$-desingularization of every $G$-stable simplicial fan of dimension $n$.

Assume $i_0<n$ and let 
\begin{align*}
 \Sigma_{i_0} \prec_\textrm{ref} \dots \prec_\textrm{ref} \Sigma_i\prec_\textrm{ref} \dots  \prec_\textrm{ref} \Sigma_1=\Sigma\,,
\end{align*}
be a sequence of refinements as in $\eqref{eq:refinement2}$. If  $M(i_0+1,\Sigma_{i_0})=0$ then every $(i_0+1)$-dimensional cone of $\Sigma_{i_0}$ is regular and setting  $\Sigma_{i_0+1}=\Sigma_{i_0}$ we have that the sequence 
\begin{align}\label{eq:contradiction}
 \Sigma_{i_0+1} \prec_\textrm{ref} \Sigma_{i_0} \prec_\textrm{ref} \dots \prec_\textrm{ref} \Sigma_i\prec_\textrm{ref} \dots  \prec_\textrm{ref} \Sigma_1=\Sigma\,,
\end{align}
satisfies  conditions $(i)$, $(ii)$, and $(iii)$, which contradicts the maximality of $i_0$. 

This yields that $M(i_0+1,\Sigma_{i_0})>0$. Let now $\Sigma'_1=\Sigma_{i_0}$. By applying \cref{lm:induction} inductively, we can construct a sequence of $G$-stable refinements
\begin{align*}
 \Sigma'_{m}  \prec_\textrm{ref} \dots \prec_\textrm{ref} \Sigma'_j\prec_\textrm{ref} \dots  \prec_\textrm{ref} \Sigma'_1\,,
\end{align*}
such that 
$$(M(i_0+1,\Sigma'_{j+1}), N(i_0+1,\Sigma'_{j+1}))<_{\textrm{lex}}(M(i_0+1,\Sigma'_j), N(i+1,\Sigma'_j)).$$
Since $(M(i_0+1,\Sigma'_{j+1}), N(i_0+1,\Sigma'_{j+1}))$ belongs to $\Z_{\geq 0}^2$ which is bounded below by $(0,0)$ with the lexicographical order, this chain can be made to end with $\Sigma'_{m}$ satisfying $M(i_0+1,\Sigma'_m)=0$ and so setting $\Sigma_{i_0+1}=\Sigma'_{m}$ in \eqref{eq:contradiction} provides again a contradiction to the maximality of $i_0$. This yields $i_0=n$ proving the theorem.
\end{proof}

\begin{remark}
In addition to being a key step for the proof of one of the main theorems in this paper, \cref{th:G-des} has an interest on its own since it is known that $G$-desingularizations do not exist in general in dimension higher than three \cite{BoGS95} (see also \cite[Theorem 3.12]{KY}). The existence of $G$-desingularizations is important in the study of a special kind of divisors associated to desingularizations of toric varieties \cite{Bouvier,ChDY}.
\end{remark}

\begin{remark}\label{rem:moderate}
Let $\sigma\subset N_{\R}$ be a full-dimensional cone. A \textit{moderate toric resolution} of $\sigma$ is a regular subdivision $\Sigma$ of $\sigma$ such that for each cone $\tau\in\Sigma$, the affine hyperplane spanned by the ray generators of $\tau$ intersects every ray of $\sigma$. This notion was introduced in \cite{ChDY} as a tool to study essential divisors of $F$-blowups of toric varieties. Let us briefly comment on the relation of moderate toric resolutions, $G$-desingularizations and $G$-stability.

Let $\sigma\subset N_{\R}$ be a $G$-stable cone and let $\Sigma$ be a $G$-desingularization. Let $\tau\in\Sigma$ and let $H_{\tau}$ be the affine hyperplane generated by the ray generators of $\tau$. By \cref{pr:sub-dcmgs}, $\tau$ is $G$-stable. In particular, $\Gamma(\tau)\subset\Gamma(\sigma)$. Since $\tau$ is regular we have that $\Gamma(\tau)=\tau\cap H_{\tau}$. We conclude that $\Sigma$ is a moderate toric resolution by \cite[Proposition 6.10]{ChDY}.

Hence, we see that $G$-stability is an important notion both for Nash blowups and $F$-blowups.
\end{remark}

\subsection{The normalized Nash blowup of a normal toric variety}\label{subsec:Nash free of char}

In this section we relate the notion of $G$-stability of a cone $\sv$ with the normalized Nash blowup of $X(\sigma)$. We prove that for $G$-stable cones, the combinatorics of the normalized Nash blowup of $X(\sigma)$ is independent of the characteristic of the base field. In what follows we consider $G$-stable cones in $M_{\R}$, the definition of $G$-stability being exactly the same as for cones in $N_{\R}$.

Let $\sigma\subset N_{\R}$ be a $d$-dimensional cone, where $d=\operatorname{rank}(N)$. Let $\sv\subset M_{\R}$ be its dual cone and recall that $\gsv$ is the minimal generating set of the semigroup $\sv\cap M$. Let $\xs$ be the corresponding normal toric variety.

For a collection of $d$ elements $\{h_1,\dots,h_d\}\subset\gsv$, we define the matrix $(h_1 \cdots h_d)$ whose columns are the vectors $h_i$. Letting $p\geq0$ be the characteristic of the base field $\K$ we denote 
$$\operatorname{det}_p(h_1\cdots h_d)=
\begin{cases}
  \det(h_1\cdots h_d) & \mbox{if } p=0 \\
  \det(h_1\cdots h_d) \mod p & \mbox{if } p>0
\end{cases}\,.$$

Consider the following polyhedron associated to the toric variety $X(\sigma)$:
\begin{equation}\label{eq:N_polyhedron}
     \mathcal{N}_p(\sigma)=\operatorname{Conv}\big\{(h_{1}+\dots+h_{d})+\sv\mid h_{i}\in G(\sigma^\vee)\mbox{ and }\operatorname{det}_p(h_1 \cdots h_d)\neq 0\big\}.
\end{equation}

As one may expect, this polyhedron depends on the characteristic of the field. Indeed, there are explicit examples of a cone $\sigma$ such that $\mathcal{N}_0(\sigma)\neq\mathcal{N}_p(\sigma)$ for some $p\neq0$ \cite[Section 3]{DJNB}.

In this section we show that the polyhedron $\mathcal{N}_p(\sigma)$ is independent of the characteristic $p$ if $\sigma^\vee$ is $G$-stable. As we will see at the end of the section, this result implies that the combinatorics of the normalized Nash blowup of $X(\sigma)$ are independent of the characteristic of the base field (see \cref{cor:Nash-free}).

\begin{theorem} \label{th:Nash polyh}
Let $\sigma \subset N_\R$ be a full-dimensional strongly convex cone. If $\sigma^\vee$ is $G$-stable then $\mathcal{N}_0(\sigma)=\mathcal{N}_p(\sigma)$ for all $p>0$ prime.
\end{theorem}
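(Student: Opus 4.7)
The plan is to prove both inclusions separately; only $\mathcal{N}_0(\sigma) \subseteq \mathcal{N}_p(\sigma)$ requires $G$-stability.

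The inclusion $\mathcal{N}_p(\sigma) \subseteq \mathcal{N}_0(\sigma)$ is immediate from the definition: if $\det_p(h_1\cdots h_d) \neq 0$, then $\det(h_1\cdots h_d)$ is not divisible by $p$ and so is nonzero in $\mathbb{Z}$. Hence every generator of $\mathcal{N}_p(\sigma)$ is a generator of $\mathcal{N}_0(\sigma)$.

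For the reverse inclusion, it suffices to show that every generator $h_1+\cdots+h_d$ of $\mathcal{N}_0(\sigma)$ (with $h_i \in G(\sigma^\vee)$ and $\det(h_1 \cdots h_d) \neq 0$) lies in $\mathcal{N}_p(\sigma)$. Set $A = \{h_1,\ldots,h_d\}$; then $\sigma^\vee_A$ is a full-dimensional simplicial subcone, and by \cref{pr:sub-dcmgs} it inherits $G$-stability from $\sigma^\vee$. Apply \cref{th:G-des} to obtain a $G$-desingularization $\Sigma'$ of $\sigma^\vee_A$: its rays lie in $G(\sigma^\vee_A) = G(\sigma^\vee) \cap \sigma^\vee_A \subseteq G(\sigma^\vee)$, and every top-dimensional cone $\tau \in \Sigma'(d)$ is regular. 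Regularity forces the ray generators $\{g^\tau_1, \ldots, g^\tau_d\}$ of $\tau$ to satisfy $|\det(g^\tau_1 \cdots g^\tau_d)| = 1$, so $\det_p(g^\tau_1 \cdots g^\tau_d) \neq 0$ for every prime $p$, and consequently each $(g^\tau_1 + \cdots + g^\tau_d) + \sigma^\vee$ lies in $\mathcal{N}_p(\sigma)$.

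The remaining task is to show $h_1+\cdots+h_d \in \mathrm{Conv}\{g^\tau_1+\cdots+g^\tau_d : \tau \in \Sigma'(d)\} + \sigma^\vee$. I would prove this by a centroid/subdivision argument. Choose the unique $\mu \in N_\Q$ with $\mu(h_i) = 1$ for all $i$ (well-defined since the $h_i$ are linearly independent), cut $\sigma^\vee_A$ by the transversal hyperplane $H = \{\mu = 1\}$ to obtain the simplex $T = \mathrm{Conv}(h_1,\ldots,h_d)$ subdivided by $\Sigma'$ into subsimplices $T_\tau$ with vertices $g/\mu(g)$ for $g \in \tau(1)$, and apply the classical identity $\mathrm{centroid}(T) = \sum_\tau \frac{\mathrm{vol}(T_\tau)}{\mathrm{vol}(T)}\,\mathrm{centroid}(T_\tau)$. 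Multiplying through by $d$ yields the explicit relation $\sum_i h_i = \sum_\tau \lambda_\tau \sum_{g \in \tau(1)} g/\mu(g)$ with $\lambda_\tau \geq 0$ and $\sum_\tau \lambda_\tau = 1$.

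The main obstacle is converting this rescaled identity into one on the unrescaled sums $\sum_{g \in \tau(1)} g$ modulo $\sigma^\vee$. Subtracting gives $\sum_i h_i - \sum_\tau \lambda_\tau \sum_{g \in \tau(1)} g = \sum_\tau \lambda_\tau \sum_g (1/\mu(g)-1)\,g$, which lies in $\sigma^\vee_A \subseteq \sigma^\vee$ provided $\mu(g) \leq 1$ for every refinement ray $g$. I expect this to follow from the $G$-stability of $\sigma^\vee_A$: the Hilbert basis of a $G$-stable simplicial cone should lie on the compact facet $\mathrm{Conv}(h_1,\ldots,h_d) = \sigma^\vee_A \cap H$, collapsing the rescaling trivially. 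An equivalent route is to reformulate the target inclusion via support functions as the inequality $\langle h_1+\cdots+h_d,v\rangle \geq \min_\tau \langle g^\tau_1+\cdots+g^\tau_d,v\rangle$ for every $v \in \sigma$, and prove it directly from the combinatorics of $\Sigma'$.
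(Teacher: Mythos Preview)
Your argument is correct and follows a genuinely different route from the paper's. Both proofs rest on the same two ingredients---\cref{th:G-des} (existence of a $G$-desingularization) and \cref{lem:convex-function} (the inequality $\mu(g)\leq 1$ for Hilbert basis elements of a simplicial $G$-stable cone)---but deploy them differently.

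The paper argues by contrapositive on the vertices: it shows that any $v\in\mathcal J_0(\sigma)$ with $d(v)>1$ fails to be a vertex of $\mathcal N_0(\sigma)$, by picking a \emph{minimal} non-regular face $\delta$ of $\operatorname{Cone}(h_1,\dots,h_d)$, $G$-desingularizing that face, and then assembling by hand a polytope $P\subset\mathcal N_0(\sigma)$ containing $v$ but not as a vertex. Your approach instead shows directly that every generator $h_1+\cdots+h_d$ of $\mathcal N_0(\sigma)$ lies in $\mathcal N_p(\sigma)$, by $G$-desingularizing the full cone $\sigma^\vee_A$ and invoking the centroid decomposition of the cross-sectional simplex. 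This is more conceptual: the convex weights $\lambda_\tau=\operatorname{vol}(T_\tau)/\operatorname{vol}(T)$ come for free from the subdivision, whereas the paper has to manufacture its convex combination via a somewhat intricate choice of the function $f$ and integers $b_i$. On the other hand, the paper's route yields the extra byproduct that every vertex of $\mathcal N_0(\sigma)$ admits a representation with determinant $\pm 1$.

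The one step where your write-up is tentative is exactly where \cref{lem:convex-function} applies. You correctly identify that you need $\mu(g)\leq 1$ for each ray $g$ of $\Sigma'$; since $\Sigma'(1)=G(\sigma^\vee_A)$ and $\sigma^\vee_A$ is simplicial $G$-stable by \cref{pr:sub-dcmgs}, that lemma (with $\mu=\gamma_0/\ell_0$) gives precisely this inequality. Your stronger expectation that the Hilbert basis lies \emph{on} the hyperplane $H$ (i.e.\ $\mu(g)=1$) is not needed and should not be asserted---in particular, do not claim that $\operatorname{Conv}(h_1,\dots,h_d)$ is \emph{the} unique compact facet of $\Gamma_+(\sigma^\vee_A)$. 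With the single citation of \cref{lem:convex-function} in place of that sentence, your centroid argument is complete as written.
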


Before proceeding to the proof of \cref{th:Nash polyh}, we prove the following lemma.

\begin{lemma} \label{lem:convex-function}
Let $\sigma^\vee\subset M_\R$ be a simplicial full-dimensional $G$-stable cone. Then there exists a unique primitive vector $\gamma_0 \in N$, and a unique integer $\ell_0>0$ such that:
\begin{enumerate}
    \item[$(i)$] $\langle h, \gamma_0 \rangle = \ell_0$ for all $h\in \sigma^\vee(1)$, and
    \item[$(ii)$] $\langle h, \gamma_0 \rangle \leq \ell_0$ for all $h \in G(\sigma^\vee)$.
\end{enumerate}
\end{lemma}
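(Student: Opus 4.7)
The plan is to build $\gamma_0$ as the (appropriately oriented) primitive normal vector to the affine hyperplane spanned by the ray generators of $\sigma^\vee$, and then use the $G$-stability condition \cref{def:DCMGS}~$(i)$ to show that this hyperplane supports $\Gamma_+(\sigma^\vee)$ from below.

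Since $\sigma^\vee$ is simplicial and full-dimensional of dimension $d=\operatorname{rank} M$, its primitive ray generators $h_1,\dots,h_d$ form an $\R$-basis of $M_\R$; in particular they are affinely independent and span a unique affine hyperplane $H\subset M_\R$, which is rational (as $h_i\in M$) and does not pass through the origin (no hyperplane of dimension $d-1$ can contain $d$ linearly independent vectors). Therefore there exists a unique primitive $\gamma_0\in N$ and a nonzero integer $\ell_0$ with $\langle h_i,\gamma_0\rangle=\ell_0$ for every $i$; flipping the sign of $\gamma_0$ if needed, we normalize so that $\ell_0>0$. This yields item $(i)$ and already pins down uniqueness. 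Moreover, since $\langle h_i,\gamma_0\rangle=\ell_0>0$ for every ray generator of $\sigma^\vee$, the vector $\gamma_0$ lies in the relative interior of $\sigma$.

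To prove $(ii)$ it suffices, in view of $G(\sigma^\vee)=\Gamma(\sigma^\vee)\cap M$ from \cref{def:DCMGS}~$(i)$, to show the geometric inclusion
\[
\Gamma(\sigma^\vee)\ \subseteq\ \{x\in M_\R:\langle x,\gamma_0\rangle\leq \ell_0\}.
\]
Pick any $x\in\sigma^\vee$ with $\langle x,\gamma_0\rangle>\ell_0$. Writing $x=\sum_{i=1}^{d}\lambda_i h_i$ with $\lambda_i\geq 0$, the identity $\langle x,\gamma_0\rangle=\ell_0\sum\lambda_i$ forces $s:=\sum\lambda_i>1$. Then $y:=x/s$ is a convex combination of the $h_i$, hence lies in $\Gamma_+(\sigma^\vee)$, while $x-y=(1-1/s)x$ is a nonzero element of $\sigma^\vee$. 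Consequently, for every $\gamma$ in the interior of $\sigma$ we have $\langle x-y,\gamma\rangle>0$, so $x$ does not minimize $\langle\cdot,\gamma\rangle$ on $\Gamma_+(\sigma^\vee)$. Since a face of $\Gamma_+(\sigma^\vee)$ is compact if and only if it coincides with the set of minimizers of some linear functional in the interior of $\sigma$ (the dual of the recession cone $\sigma^\vee$), we conclude that $x$ does not lie on any compact face, i.e., $x\notin\Gamma(\sigma^\vee)$. Combined with \cref{def:DCMGS}~$(i)$ this gives $(ii)$.

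The only non-mechanical input beyond linear algebra is the polyhedral-geometry fact that the compact faces of a polyhedron are exactly the minimizer loci of linear functionals lying in the relative interior of the dual of the recession cone; I expect this will either be cited or justified in a single line via polar duality, and it is the main conceptual point to get right. Everything else (the basis property of the $h_i$, the convex combination identity, and the direct invocation of $G$-stability) is routine.
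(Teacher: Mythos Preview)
Your proof is correct and follows essentially the same approach as the paper. The paper constructs $\gamma_0$ by solving $\langle h_i,\gamma\rangle=1$ over $N\otimes_\Z\Q$ and clearing denominators, which is exactly your ``primitive normal to the affine hyperplane through the $h_i$'' rephrased; for $(ii)$ the paper simply says it ``follows directly from the convexity of $\Gamma_+(\sigma^\vee)$,'' and your argument (writing $x=y+(x-y)$ with $y\in\operatorname{Conv}(h_1,\dots,h_d)\subset\Gamma_+(\sigma^\vee)$ and $x-y\in\sigma^\vee\setminus\{0\}$, then invoking the characterization of compact faces via interior functionals) is precisely the unpacking of that one-line appeal to convexity.
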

\begin{proof}
The set $\sigma^\vee(1)$ is a basis of $M_\R$ and, since they are all elements in $M$, they also form a basis of $M\otimes_\Z \Q$. Hence, there exists a unique vector $\gamma \in N\otimes_\Z \Q$ such that $\langle h, \gamma \rangle = 1$ for all $h\in \sigma^\vee(1)$. Eliminating denominators, we obtain the existence of $\gamma_0$ and $\ell_0$ satisfying $(i)$ in the lemma and the uniqueness is assured by the fact that $\gamma_0$ must be primitive. By \cref{def:DCMGS}~$(i)$ we have that $G(\sigma^\vee)=\Gamma(\sigma^\vee)\cap M$. Hence, condition $(ii)$ in the lemma follows directly from the convexity of $\Gamma_+(\sigma^\vee)$.
\end{proof}

We procede now to the proof of \cref{th:Nash polyh}.

\begin{proof}[Proof of \cref{th:Nash polyh}]
Let $\mathcal{J}_p(\sigma) \subset M$ be the set consisting of all vectors of the form
$$\mathcal{J}_p(\sigma)=\left 
\{h_{1}+\dots+h_{d}\mid h_{i}\in G(\sigma^\vee)\mbox{ and }\operatorname{det}_p(h_1\cdots h_d)\neq 0\right\}\,.$$ 
By definition,  $\mathcal{N}_p(\sigma)=\operatorname{Conv}(\mathcal{J}_p(\sigma)+\sigma^\vee)$. Hence, every vertex of $\mathcal{N}_p(\sigma)$ is contained in $\mathcal{J}_p(\sigma)$. Each $v \in \mathcal{J}_p(\sigma)$ can be expressed in the form $v = h_1 + h_2 + \dots + h_d$, 
where $h_i \in G(\sigma^\vee)$. However, this representation may not be unique. For every $v \in \mathcal{J}_0(\sigma)$, we let 
$$
d(v) = \min\left\{\lvert\operatorname{det}_0(h_1\cdots h_d)\rvert \mid h_{i}\in G(\sigma^\vee),\, h_1+\dots+h_d=v,\, \mbox{ and } \dez(h_1\cdots h_d)\neq 0\right\}\,. 
$$

Notice that, by definition, $\mathcal{J}_p(\sigma)\subset \mathcal{J}_0(\sigma)$. Thus $\mathcal{N}_p(\sigma)\subset \mathcal{N}_0(\sigma)$. If $v\in\jz$ satisfies $d(v)=1$ then $v\in\jp$. In particular, if every vertex $v$ of $\mathcal{N}_0(\sigma)$ satisfies $d(v)=1$ then $v$ also belongs to $\mathcal{N}_p(\sigma)$ implying $\mathcal{N}_p(\sigma)=\mathcal{N}_0(\sigma)$.

To prove the theorem, we show the following: if $v \in \mathcal{J}_0(\sigma)$ is such that $d(v) > 1$, then $v$ is not a vertex of the polyhedron $\mathcal{N}_0(\sigma)$. 

Let $v \in \mathcal{J}_0(\sigma)$ and assume that $d(v) > 1$. We let  $h_1, \dots, h_d \in G(\sigma^\vee)$ be such that $d(v) = \lvert\dez(h_1\cdots h_d)\rvert$. Denote as $\omega\subset M_{\R}$ the cone generated by $\{h_1,\dots, h_d\}$. Since $|\dez(h_1 \cdots h_d)|>1$, the simplicial cone $\omega\subset M_\R$ is not regular. By \cref{pr:sub-dcmgs}, the cone $\omega$ is $G$-stable and by \cref{co:sub-dcmgs}, every face $\delta\prec\omega$ is also $G$-stable. 

In the rest of the proof we fix a special  $\delta\prec \omega$ to construct a polytope $P\subset \mathcal{N}_0(\sigma)$ such that $v\in P$ but it is not a vertex of $P$. Hence it cannot be a vertex of $\mathcal{N}_0(\sigma)$ either, proving the theorem.

Let $\delta$ be a face of $\omega$ that is not regular and such that all of its proper faces are regular. Since all cones of dimension 0 or 1 are regular, we have that $\dim\delta\geq 2$. Without loss of generality, we can assume that  $\delta$ is generated by $\{h_1,\dots,h_e\}$,  where $2 \leq e \leq d$. Since  $\{h_1,\dots,h_e\}$ is linearly independent in $M_\R$ we have $e=\dim \delta$. By \cref{th:G-des}, the cone $\delta$ admits a $G$-desingularization. 

Let $\Delta\prec_\textrm{ref} \delta$ be a $G$-desingularization of $\delta$, i.e., $\Delta$ is a regular refinement of $\delta$ such that $\Delta(1)=G(\delta)$. Now let $\delta'\in \Delta$ be the unique cone containing $h_1 + \dots + h_e$ in its relative interior. Since $\delta'$ is regular, we have $G(\delta')=\delta'(1)=\{h'_1,\dots,h'_l\}$ and so we have
\begin{align} \label{eq:h-h'}
h_1 + h_2+ \dots + h_e=a_1h'_1+a_2h'_2+\dots+a_lh'_l,\quad\mbox{where}\quad a_i\in\Z_{\geq1}\,.
\end{align}

\begin{claim*}
$h'_i \notin \{h_1, \dots, h_e\}$ for all $1 \leq i \leq l\,.$
\end{claim*}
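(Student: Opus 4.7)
The plan is to argue by contradiction: assume $h'_i = h_j$ for some indices, and after relabeling take $h'_1 = h_1$. First I observe that $\dim\delta' = e$. Indeed, $h_1 + \dots + h_e$ is a strictly positive combination of every ray of $\delta$, so it lies in the relative interior of $\delta$; this forces the cone of $\Delta$ containing it in its relative interior to be $e$-dimensional. Hence $l=e$ and equation \eqref{eq:h-h'} becomes
$$h_1 + h_2 + \dots + h_e = a_1 h_1 + a_2 h'_2 + \dots + a_e h'_e.$$

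Next, let $\tau = \con{h_2, \dots, h_e}$ be the facet of $\delta$ opposite $h_1$, and let $h_1^{*}$ be a linear functional on $\operatorname{span}(\delta)$ that vanishes on $\tau$ and is positive on $h_1$ (such a functional exists because $\delta$ is simplicial). Since each $h'_j$ lies in $\delta' \subset \delta$, we have $\langle h'_j, h_1^{*} \rangle \geq 0$. Pairing the displayed equation with $h_1^{*}$ yields
$$\langle h_1, h_1^{*} \rangle = a_1\langle h_1, h_1^{*} \rangle + \sum_{j\geq 2} a_j \langle h'_j, h_1^{*} \rangle;$$
combined with $a_j \geq 1$ and $a_1\geq 1$ this forces $a_1 = 1$ and $\langle h'_j, h_1^{*} \rangle = 0$ for every $j \geq 2$. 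Setting $\tau' := \con{h'_2, \dots, h'_e}$ we deduce $\tau' \subset \tau$, and equation \eqref{eq:h-h'} collapses to $h_2 + \dots + h_e = a_2 h'_2 + \dots + a_e h'_e$, a vector lying in the relative interior of both $\tau$ and $\tau'$.

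The crux, which I expect to be the main obstacle, is showing that $\tau \in \Delta$. Once this is done, two cones of the fan $\Delta$ sharing a relative-interior point must coincide, so $\tau = \tau'$, whence $\delta' = \con{h_1, h'_2, \dots, h'_e} = \con{h_1, \dots, h_e} = \delta$; this contradicts $\delta'$ being regular while $\delta$ is not. To prove $\tau \in \Delta$: by our minimality choice of $\delta$, the proper face $\tau$ is regular, so $G(\tau) = \tau(1) = \{h_2, \dots, h_e\}$. Applying \cref{def:DCMGS}~$(ii)$ to the $G$-stable cone $\delta$ with $A = \tau(1)$ gives $G(\delta) \cap \tau = G(\tau)$. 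Thus the subfan $\Delta_\tau = \{\sigma''\in \Delta : \sigma''\subset \tau\}$ refines the regular cone $\tau$ with rays contained in $\tau(1)$, and the only such refinement of $\tau$ consists of $\tau$ together with its faces; hence $\tau \in \Delta$. The essential role of condition $(ii)$ of $G$-stability is precisely this step: it guarantees that the desingularization $\Delta$ respects the regular face $\tau$, preventing $\delta'$ from picking up any original ray of $\delta$.
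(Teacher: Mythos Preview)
Your assertion that $\dim\delta'=e$ is not justified by the reasoning you give: the fact that $h_1+\dots+h_e$ lies in the relative interior of $\delta$ does \emph{not} force the cone of $\Delta$ containing it in its relative interior to be top-dimensional. For instance, take $\delta=\con{(1,0),(1,2)}$ with the $G$-desingularization obtained by inserting the ray through $(1,1)$; then $h_1+h_2=(2,2)$ sits in the relative interior of that one-dimensional ray. Fortunately this claim is inessential: every subsequent step goes through verbatim with $l$ in place of $e$ (simply write $\tau'=\con{h'_2,\dots,h'_l}$ and $a_lh'_l$), and once you reach $\tau=\tau'$ you recover $l=e$ a posteriori. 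So after deleting the first observation your argument is correct.

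Your route is genuinely different from the paper's. The paper does not show $\tau\in\Delta$; instead it assumes $h'_1=h_1,\dots,h'_r=h_r$ (possibly $r>1$), splits into the cases $a_1=1$ and $a_1\geq 2$, and uses the face property of $\delta_0=\con{h_2,\dots,h_e}$ to force some $h'_l\in G(\delta)\cap\delta_0=G(\delta_0)$ with $h'_l\notin\delta_0(1)$, contradicting regularity of $\delta_0$. Your argument is more structural: you use $G$-stability to conclude $\Delta(1)\cap\tau=G(\delta)\cap\tau=\tau(1)$, hence $\Delta$ cannot subdivide the regular facet $\tau$, so $\tau\in\Delta$; then the fan axiom ``one cone per relative-interior point'' gives $\tau=\tau'$ and thus $\delta'=\delta$. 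Both proofs invoke condition~$(ii)$ of $G$-stability at the same place (comparing $G(\delta)\cap\tau$ with $G(\tau)$), but you package the conclusion as ``$\Delta$ respects $\tau$'', which avoids the case split and the separate preliminary step showing that not all $h'_i$ lie among the $h_j$.
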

\begin{proof}[Proof of the claim]
First we prove that there exists at least one $1 \leq i \leq l$ such that
\[
h'_i \notin \{h_1, h_2, \dots, h_e\}.
\]
We proceed by contradiction. Assume that $\{h'_1,h'_2,\dots,h'_l\} \subset \{h_1, h_2, \dots, h_e\}$.  If $l = e$ then $\delta' = \delta$ which is impossible since $\delta'$ is regular while $\delta$ is not. Hence $l < e$ and $\delta'$ is a proper face of $\delta$. We know that $h_1 + h_2 + \dots + h_e$ is in the relative interior of $\delta$. In particular, it is not contained in any proper face of $\delta$. This contradicts the choice of $\delta'$.

We now assume that a proper subset of $\{h'_1,h'_2,\dots,h'_l\}$  is contained in $\{h_1, h_2, \dots, h_e\}$. Without loss of generality, we can assume $h'_i=h_i$ for $1\leq i\leq r<l$, and $ h'_i\notin \{h_1,...,h_e\}$ for $r<i\leq l$. If $a_1 = 1$, then \eqref{eq:h-h'} yields
\[
h_2 + \dots + h_e = \underbrace{a_2 h_2 + \dots + a_r h_r + a_{r+1} h'_{r+1} + \dots + a_{l-1}h'_{l-1}}_{\alpha} + \underbrace{a_l h'_l}_{\beta}.
\]

Now, $\alpha,\beta\in \delta$ and $\alpha+\beta\in \operatorname{Cone}(h_2, \dots,h_e)=:\delta_0$. Since $\delta_0\prec \delta$ is a proper face, it follows that $\alpha,\beta\in \delta_0$ and so $h'_l\in \delta_0$. Moreover, $h'_l\in \delta'(1)\subset\Delta(1)=G(\delta)$. Hence $h'_l\in G(\delta)\cap\delta_0$. Since $\delta$ is $G$-stable it follows that $h'_l\in G(\delta_0)$. We also know that $h'_l\notin \delta_0(1)=\{h_2, \dots,h_e\}$. This implies that $\delta_0$ is a singular cone. This contradicts the choice of $\delta$ which assures that each of its proper faces are regular.

If $a_1 \geq 2$, then \eqref{eq:h-h'} yields
\[
h_2 + \dots + h_e = \underbrace{(a_1 - 1) h_1}_\alpha + \underbrace{a_2 h_2 + \dots + a_r h_r + a_{r+1} h'_{r+1} + \dots + a_l h'_l}_\beta.
\]
Now, $\alpha,\beta\in \delta$ and $\alpha\notin\delta_0$. Hence, $\alpha+\beta\notin\delta_0$ which is a contradiction since $h_2 + \dots + h_e\in \delta_0$. This concludes the proof of the claim. \phantom\qedhere
\end{proof}

We now apply the claim to show that for every $i\in\{1,\dots,e\}\mbox{ and } j\in\{1,\dots,l\}$, the set
\begin{align} \label{eq:li}   
\{h_1, \dots, h_{i-1}, \hat{h}_i, h_{i+1} \dots,  h_d\} \cup \{h'_j\} \mbox{ is linearly independent,}
\end{align}
where $\hat{h_i}$ indicates that this element is removed from the list. Indeed, by the claim we know $h'_j\notin\{h_1,h_2,\dots,h_e\}=\delta(1)$. Since all faces of $\delta$ are regular by definition, we have that $h'_j$ is in the relative interior of $\delta$. Hence,
\[
h'_j = c_1 h_1 + c_2 h_2 + \dots + c_e h_e,\quad\mbox{with}\quad c_i> 0.
\]
This proves \eqref{eq:li} since $\{h_1, \ldots, h_d\}$ is linearly independent.

Now, let $\gamma_0$ and $\ell_0$ be as in \cref{lem:convex-function} applied to the cone $\delta=\operatorname{Cone}(h_1,\dots,h_e)$. Recalling that $h'_j\in G(\delta)$ for all $j$, the lemma and \eqref{eq:h-h'} imply
$$e\ell_0=\langle h_1 + h_2+ \cdots + h_e,\gamma_0\rangle=\langle a_1h'_1+a_2h'_2+\dots+a_lh'_l,\gamma_0\rangle\leq \ell_0\sum_{j=1}^l a_j\,.
$$
Thus $e\leq \sum_{j=1}^l a_j$. This provides that there exists a function $f: \{1, 2,\ldots, e\} \to \{1,\ldots, l\}$ such that
$$
1\leq\#f^{-1}(j) \leq a_j, \quad\mbox{for all}\quad 1 \leq j \leq l\,.
$$
These last inequalities also assure that we can pick positive integers
 $b_1, b_2,\dots, b_e$ such that 
\begin{align} \label{eq:los-bi}
    \sum_{i \in f^{-1}(j)} b_i = a_j, \quad\mbox{for all}\quad \quad 1 \leq j \leq l\, .
\end{align}
For each $i\in\{1,\dots,e\}$, Equation \eqref{eq:li} implies that the set $\{h_1,\dots,h_{i-1}, \hat{h}_i, h_{i+1},\dots, h_d\}\cup\left\{h'_{f(i)}\right\}$ is linearly independent, for each $i\in\{1,\dots,e\}$. 
Hence $h_1 + \dots +h_{i-1}+ \hat{h}_i+ h_{i+1}+\dots+ h_d+ h'_{f(i)}\in \mathcal{J}_0(\sigma)$ (as before $\hat{h}_i$ does not appear in the sum). Recalling that $h'_j\in\delta'\subset\delta\subset\omega\subset\sv$ we obtain
\begin{align} \label{eq:elemento-raro}
h_1 + \cdots& +h_{i-1}+\hat{h}_i+ h_{i+1}+\dots+ h_d+ b_i h'_{f(i)}=\notag\\
&(h_1 + \dots +h_{i-1}+ \hat{h}_i+ h_{i+1}+\dots+ h_d+ h'_{f(i)})+(b_i-1)h'_{f(i)}\in \mathcal{N}_0(\sigma)\,.
\end{align}
Applying \eqref{eq:h-h'} and \eqref{eq:los-bi}, we have that 
\begin{equation}\label{eq:la-milagrosa}
  \begin{split}
\sum_{i=1}^{e} (h_1 + &\dots +h_{i-1}+\hat{h}_i+ h_{i+1}+\dots+ h_d+ b_i h'_{f(i)}) \\
&=(e-1)(h_1+\dots+h_e)+e(h_{e+1}+\cdots+h_d)+\sum_{i=1}^e b_ih'_{f(i)} \\
&=(e-1)(h_1+\dots+h_e)+e(h_{e+1}+\cdots+h_d)+\sum_{j=1}^l \sum_{i\in f^{-1}(j)}b_ih'_j \\
&=(e-1)(h_1+\dots+h_e)+e(h_{e+1}+\cdots+h_d)+(a_1h'_1+a_2h'_2+\dots+a_lh'_l) \\
&=e(h_1+\dots+h_e)+e(h_{e+1}+\cdots+h_d)\\
&=e(h_1+\dots+h_d).
  \end{split}
\end{equation}

Assume for a moment that 
$h_1 + \dots + \hat{h}_i + \dots + h_d + b_i h'_{f(i)} = h_1 + \dots + \hat{h}_j + \dots + h_d + b_{j} h'_{f(j)}$ for all $i,j\in\{1,\ldots,e\}$. Replacing this identity in \eqref{eq:la-milagrosa} we obtain
\[
h_1 + \dots + \hat{h}_i + \dots + h_d + b_i h'_{f(i)}=h_1 + \dots + h_d\quad\mbox{for all } \quad i\in\{1,\dots,e\}.
\]
This yields $h_i = b_i h'_{f(i)}$ for each $i$. The elements $h_i$ and $h'_j$ belong to $G(\sigma^\vee)$ since $\sigma^\vee$ is $G$-stable. Hence $h_i$ and $h'_j$ are primitive vectors. Thus $b_i=1$ and $h_i = h'_{f(i)}$. This contradicts the claim. 

We conclude that there exist $i,j \in \{1, \dots, e\}$ such that
\[
h_1 + \dots + \hat{h}_i + \dots + h_d + b_i h'_{f(i)} \neq h_1 + \dots + \hat{h}_j + \dots + h_d + b_{j} h'_{f(j)}.
\]
This yields that the polytope 
\[
P=\operatorname{Conv}\left(\left\{ (h_1 + \dots + \hat{h}_i + \dots + h_d + b_i h'_{f(i)}) : 1 \leq i \leq e \right\}\right)
\]
is not a single point. Furthermore, by \eqref{eq:elemento-raro} we have that $P\subset \mathcal{N}_0(\sigma)$. Finally, \eqref{eq:la-milagrosa} implies that  $v=h_1 + \dots + h_d$ is not a vertex of $P$ and so $v$ is not a vertex of  $\mathcal{N}_0(\sigma)$, concluding the proof.
\end{proof}

Let us now explain the consequences of  \cref{th:Nash polyh} on the normalized Nash blowup of a normal toric variety.

Given a full-dimensional lattice polyhedron $P\subset M_{\R}$, i.e., a polyhedron of dimension $\operatorname{rank} M$ having its vertices in $M$, we define its \emph{normal fan} in $N_{\R}$ as the unique fan whose maximal cones are $\sigma_v$, where $v$ runs over all the vertices of $P$, and  $\sigma_v$ is the dual cone of the cone in $M_{\R}$ generated by
\vspace{-1mm}
\begin{align*} 
\{w-v\in M_{\R} \mid w \mbox{ is a vertex of } P \}.
\end{align*}

The following theorem shows the relation of the polyhedron $\mathcal{N}_p(\sigma)$ and the Nash blowup of $X(\sigma)$ (see \cite[Propositions 32 and 60]{GoTe14} for the characteristic zero case and \cite[Proposition 32]{GoTe14} and \cite[Theorem 1.9]{DJNB} for the prime characteristic case).

\begin{theorem} \label{th:Nash-blowup}
The normalized Nash blowup of the toric variety $X(\sigma)$ is $X(\Sigma_p)\to X(\sigma)$ where $\Sigma_p$ is the normal fan of the polyhedron $\mathcal{N}_p(\sigma)$.
\end{theorem}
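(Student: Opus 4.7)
The plan is to deduce this theorem by combining two results already present in the literature and cited in the statement. The first ingredient is the identification of the normalized Nash blowup of $X(\sigma)$ with the normalized blowup of the logarithmic Jacobian ideal $\jp\subset\K[\sv\cap M]$; this identification is characteristic dependent and is established by \cite[Proposition~32]{GoTe14} in characteristic zero and by \cite[Theorem~1.9]{DJNB} in positive characteristic. The second ingredient is the standard fact that, for any monomial ideal on an affine toric variety, the normalized blowup is the toric variety whose fan is the normal fan of the Newton polyhedron of the ideal; in the toric setting this is recorded as \cite[Proposition~60]{GoTe14}.

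First I would recall the explicit description of the monomial generators of $\jp$. By the cited references, $\jp$ is generated as a monomial ideal by the elements $\chi^{h_1+\cdots+h_d}$ where $(h_1,\dots,h_d)$ ranges over the $d$-tuples from $\gsv$ for which the Jacobian minor $\det(h_1\,\cdots\,h_d)$ is nonzero in $\K$. Since $\K$ has characteristic $p$, and the entries of the minor are integers, this nonvanishing is precisely the condition $\dep(h_1\,\cdots\,h_d)\neq 0$. Here the characteristic enters solely through the reduction modulo $p$ of the determinant.

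Next, given this list of monomial generators, the Newton polyhedron of the monomial ideal $\jp$ is, by the definition recorded in \eqref{eq:N_polyhedron},
\[
\operatorname{Conv}\left\{h_1+\cdots+h_d\mid h_i\in\gsv,\ \dep(h_1\,\cdots\,h_d)\neq 0\right\}+\sv \;=\; \mathcal{N}_p(\sigma).
\]
Applying the second ingredient to $I=\jp$, the normalized blowup of $\jp$ is the toric variety $X(\Sigma_p)$, where $\Sigma_p$ is the normal fan of $\mathcal{N}_p(\sigma)$, and the induced birational morphism is $X(\Sigma_p)\to X(\sigma)$. Composed with the first ingredient, this is the normalized Nash blowup.

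The main potential obstacle is purely expository: one must check that the two cited results use the same notion of logarithmic Jacobian ideal and the same sign/reduction convention for $\dep$ as in the present paper, and that the ``normalized blowup equals normal fan of Newton polyhedron'' statement applies verbatim to $\jp$ even when its minimal monomial generating set is not known a priori. No new combinatorial or geometric input is needed beyond this bookkeeping, so the proof amounts to citing \cite{GoTe14} and \cite{DJNB} and matching conventions.
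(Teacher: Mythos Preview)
Your proposal is correct and matches the paper's own treatment: the theorem is not proved in the paper but is stated with exactly the citations you invoke, namely \cite[Propositions~32 and~60]{GoTe14} for characteristic zero and \cite[Proposition~32]{GoTe14} together with \cite[Theorem~1.9]{DJNB} for positive characteristic. Your decomposition into the two ingredients (Nash blowup $=$ normalized blowup of $\jp$, and normalized blowup of a monomial ideal $=$ toric variety of the normal fan of its Newton polyhedron) is precisely how the paper packages the result.
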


In other words, \cref{th:Nash-blowup} states that the normalized Nash blowup of $X(\sigma)$ depends completely on the polyhedron $\mathcal{N}_p(\sigma)$. Hence, \cref{th:Nash polyh} has the following consequence.

\begin{corollary}\label{cor:Nash-free}
Let $\sigma \subset N_\R$ be a full-dimensional strongly convex cone and such that $\sigma^\vee$ is $G$-stable. Let $\mathcal P$ be any property of $X(\Sigma_p)$ that corresponds to a combinatorial property of $\Sigma_p$. Then $\mathcal P$ holds for $X(\Sigma_0)$ if and only if $\mathcal P$ holds for $X(\Sigma_p)$ for all $p>0$. In particular, $X(\Sigma_0)$ is non-singular if and only if $X(\Sigma_p)$ is non-singular.
\end{corollary}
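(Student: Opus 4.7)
The plan is to chain together the two theorems immediately preceding the corollary and observe that the normalized Nash blowup is completely recovered from the combinatorics of $\mathcal{N}_p(\sigma)$. First, since $\sigma^\vee$ is $G$-stable by hypothesis, \cref{th:Nash polyh} applies and gives the equality of polyhedra
\[
\mathcal{N}_0(\sigma)=\mathcal{N}_p(\sigma)\quad\text{for every prime }p>0.
\]
Next, by \cref{th:Nash-blowup}, the fan $\Sigma_p$ defining the normalized Nash blowup of $X(\sigma)$ is precisely the normal fan of $\mathcal{N}_p(\sigma)$. Since the normal fan is determined by the polyhedron, the displayed equality forces $\Sigma_0=\Sigma_p$ for all primes $p>0$.

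From here the statement about combinatorial properties is immediate: if $\mathcal P$ is any property of $X(\Sigma_p)$ that depends only on the combinatorial data of the fan $\Sigma_p$, then the equality $\Sigma_0=\Sigma_p$ shows that $\mathcal P$ holds for $X(\Sigma_0)$ if and only if it holds for $X(\Sigma_p)$ for every prime $p>0$.

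For the last assertion, recall from the preliminaries (see the end of \cref{sec:toric}) that smoothness of a toric variety $X(\Sigma)$ is equivalent to the regularity of the fan $\Sigma$, which in turn is the condition that each cone $\sigma\in\Sigma$ has its primitive ray generators $\sigma(1)$ forming part of a $\Z$-basis of $N$. This is clearly a combinatorial property of the fan, so the particular case follows by applying the general statement with $\mathcal P$ = ``regularity of $\Sigma_p$''. There is no real obstacle here; the entire content has already been absorbed by \cref{th:Nash polyh}, and the corollary is just its translation through \cref{th:Nash-blowup}.
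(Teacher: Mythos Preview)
Your proposal is correct and follows exactly the approach the paper takes: the paper does not even write out a formal proof, but simply presents the corollary as the immediate consequence of combining \cref{th:Nash polyh} with \cref{th:Nash-blowup}, and then invokes the standard fact that smoothness of a toric variety is a combinatorial property of its fan. You have fleshed out precisely this chain of implications.
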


We conclude with an example illustrating the objects and results of this section. In the example we will find:
\begin{enumerate}
    \item A $G$-stable cone whose Hilbert basis is contained in a hyperplane. 
    \item The normalized Nash blowup of the variety of such cones can be singular.
    \item The $G$-stability is not preserved in the polyhedron defining the normalized Nash blowup.
\end{enumerate}

\begin{example}\label{ex:thms-1}
Let $\sigma^\vee\subset M_\mathbb{R}$ be the cone generated by the columns of the matrix
$$
B=\left[\begin{array}{rrr}
1 & 0 & 2 \\
0 & 1 & 2  \\
0 & 0 & 3  
\end{array}\right]\,.
$$
The minimal generating set of the semigroup $\sigma^\vee\cap M$ is 
$G(\sigma^\vee)=\{h_1,\ldots,h_4\}$, where the first three elements are the ray generators of the cone and $h_4=(1,1,1)$. The cone $\sigma^\vee$ is $G$-stable. Indeed, $G(\sigma^\vee)$ is contained in the hyperplane given by $z_1+z_2-z_3=1$. Hence, \cref{def:DCMGS}~$(i)$ holds. Moreover, the cone generated by every proper subset of $G(\sigma^\vee)$ is regular. This yields \cref{def:DCMGS}~$(ii)$.

Let $\sigma\subset N_\mathbb{R}$ be the cone whose dual is $\sigma^\vee$. Recall that $p\geq0$ denotes the characteristic of the field $\K$. By \cref{th:Nash-blowup}, the Nash blowup of $X(\sigma)$ is $X(\Sigma_p)$ where $\Sigma_p$ is the normal fan of $\mathcal{N}_p(\sigma)$. By \cref{th:Nash polyh}, we have that  $\mathcal{N}_0(\sigma)=\mathcal{N}_p(\sigma)$. Let us illustrate this fact by doing the explicit computations in this particular example. Firstly,
\begin{align} \label{eq:exthms-1}
\det(h_1\,h_2\,h_4)=\det(h_1\,h_3\,h_4)=\det(h_2\,h_3\,h_4)=\pm1\quad \mbox{and}\quad\det(h_1\,h_2\,h_3)=3\,.
\end{align}
Let $v_i=(h_1+h_2+h_3+h_4)-h_i$, $1\leq i\leq 4$. By \eqref{eq:exthms-1}, $v_1,\ldots,v_4$ are potential vertices of $\mathcal{N}_0(\sigma)$. Notice that
\begin{align*}
v_1&=(3,4,4),\\
v_2&=(4,3,4),\\
v_3&=(2,2,1),\\
v_4&=(3,3,3)=\frac{1}{3}(v_1+v_2+v_3).
\end{align*}
Hence, $v_4=h_1+h_2+h_3$ is not a vertex of $\mathcal{N}_0(\sigma)$. By \eqref{eq:exthms-1}, we conclude that $\mathcal{N}_0(\sigma)=\mathcal{N}_p(\sigma)$. 

Let us now compute a cone of maximal dimension of the normal fan of $\mathcal{N}_0(\sigma)$. 
Let $\sigma_1\subset \sigma \subset N_\mathbb{R}$ be the full-dimensional cone corresponding to the vertex $v_3$. 
Then its dual cone $\sigma_1^\vee\subset M_\mathbb{R}$ is the cone generated by 
$$v_2-v_3=(2,1,3),\quad v_1-v_3=(1,2,3),\quad h_1=(1,0,0),\quad h_2=(0,1,0),\quad\mbox{and}\quad h_3=(2,2,3).$$
Now, $(v_2-v_3)+h_2=h_3$ so $\sigma_1^\vee$ is the cone generated by the columns of the matrix
$$
B_1=\left[\begin{array}{rrrr}
1 & 0 & 1 & 2 \\
0 & 1 & 2 & 1 \\
0 & 0 & 3 & 3 
\end{array}\right]\,.
$$
In particular, this cone is not regular. 
Hence, the normalized Nash blowup of $X(\sigma)$ is singular. 
The same goes for the Nash blowup without normalizing.

Finally, we show that the cone $\sigma_1^\vee$ is not $G$-stable. Indeed, the minimal generating set of $\sigma_1^\vee$ is $G(\sigma_1^\vee)=\{h_1,h_2,h_3,h_4,h_5,h_6\}$ where the first four elements are the ray generators of the cone $\sigma_1^\vee$, $h_5=(1,1,1)$, and $h_6=(1,1,2)$. Now, \cref{def:DCMGS}~$(i)$ does not hold since 
$h_5\notin\Gamma(\sigma_1^\vee)\cap M$. Indeed, the element $h_5$ satisfies 
$$h_5=u_1+u_2,\quad \mbox{where}\quad u_1=\frac13(h_1+h_2+h_6)\in \Gamma(\sigma_1^\vee)\mbox{ and } u_2=\frac13(1,1,1) \in \sigma_1^\vee\setminus\{0\}.$$ 
\end{example}

\begin{remark}
We have seen in the previous example that the singularities of $X(\sigma)$ are not resolved by a single application of the normalized Nash blowup. This contrasts with the two-dimensional case, where the singularities of any affine normal toric surface $X(\tau)$, such that the minimal generating set of $\tau^\vee \subset M_{\mathbb{R}} \simeq \mathbb{R}^2$ is contained in a line (a hyperplane), are resolved by a single application of the Nash blowup. In \cref{section:G flat} we will generalize to arbitrary dimensions this one-step resolution property of such normal toric surfaces by adding certain conditions on the Hilbert basis of a cone (see \cref{thm:altura1}).
\end{remark}

\subsection{Known results on normalized Nash blowups of toric varieties}

In this section we discuss known results on Nash blowups that are particular cases of \cref{th:Nash polyh} and  \cref{cor:Nash-free}. 

Let us start with the case of normal toric surfaces.

\begin{proposition}\label{prop:G-stable-dim-two}
Let $\sigma\subset M_{\R}$ be a two-dimensional cone, where $M\cong\Z^2$. Then $\sigma$ is $G$-stable.    
\end{proposition}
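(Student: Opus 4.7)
The plan is to verify both conditions of \cref{def:DCMGS} for an arbitrary two-dimensional cone $\sigma\subset M_{\R}$, with the definition adapted from $N_{\R}$ to $M_{\R}$ in the obvious way. Condition $(i)$, that $G(\sigma)=\Gamma(\sigma)\cap M$, is the classical two-dimensional description of the Hilbert basis in \cite[Proposition~1.21]{oda1983convex}, already invoked in the discussion after \cref{ex:surf-one-segment}; I would simply cite it.

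For condition $(ii)$, fix $A\subseteq G(\sigma)$. The inclusion $G(\sigma)\cap\sigma_A\subseteq G(\sigma_A)$ holds in any dimension with no convexity input: if $v\in\sigma\cap M$ is indecomposable in $\sigma\cap M$, it is a fortiori indecomposable in the subsemigroup $\sigma_A\cap M$. The content of the proposition is therefore the reverse inclusion. Applying $(i)$ to both $\sigma$ and $\sigma_A$ (the latter is again two-dimensional when $\dim\sigma_A=2$, and the lower-dimensional cases are trivial), the reverse inclusion reduces to the geometric containment
\[
\Gamma(\sigma_A)\subseteq\Gamma(\sigma),
\]
since one then reads off $G(\sigma_A)=\Gamma(\sigma_A)\cap M\subseteq\Gamma(\sigma)\cap\sigma_A\cap M=G(\sigma)\cap\sigma_A$. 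I would in fact establish the stronger equality $\Gamma(\sigma_A)=\Gamma(\sigma)\cap\sigma_A$.

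Writing $\sigma_A=\operatorname{Cone}(v_i,v_j)$ with $v_i,v_j\in A$ the extreme elements, the inclusion $\Gamma(\sigma)\cap\sigma_A\subseteq\Gamma(\sigma_A)$ is an edge-by-edge argument: each compact edge of $\Gamma(\sigma)$ meeting $\sigma_A$ is supported by a linear functional $f$ with $f\geq c>0$ on $\Gamma_+(\sigma)$, hence also on $\Gamma_+(\sigma_A)\subseteq\Gamma_+(\sigma)$, and the strict positivity of $f$ on $\sigma_A\setminus\{0\}$ (forced by $f(v_i),f(v_j)>0$) makes the resulting face of $\Gamma_+(\sigma_A)$ bounded, hence a compact face. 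A short check, using that a corner of $\Gamma(\sigma)$ lying in $\sigma_A$ is still extreme in $\Gamma_+(\sigma_A)\subseteq\Gamma_+(\sigma)$, shows that the glued faces coincide with the pieces of $\Gamma(\sigma)\cap\sigma_A$, producing a connected polyline from $v_i$ to $v_j$ lying inside $\Gamma(\sigma_A)$.

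The main obstacle, and the step where two-dimensionality is indispensable, is the reverse inclusion. For this I would invoke the planar fact that the compact part of $\partial\Gamma_+(\sigma_A)$ is a single connected polyline joining the two ray generators $v_i$ and $v_j$; the polyline produced in the previous paragraph is a connected subset of this boundary containing both endpoints, and must therefore exhaust it. This uniqueness of the compact inner boundary in dimension two is exactly what fails for three-dimensional cones such as those in \cref{ex:no-c1} and \cref{ex:no-c2}, and is the reason the proposition is specific to dimension two.
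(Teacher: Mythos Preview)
Your argument is correct and establishes condition~$(ii)$ by a genuinely different route from the paper. The paper's proof is arithmetic: ordering $G(\sigma)=\{\gamma_1,\dots,\gamma_n\}$ counterclockwise, it identifies $G(\sigma)\cap\sigma_A$ with the consecutive block $\{\gamma_{i_1},\dots,\gamma_{i_r}\}$ and invokes the classical fact that any two consecutive Hilbert basis elements satisfy $\det(\gamma_k\,\gamma_{k+1})=\pm1$ \cite[Proposition~10.2.2]{cox2011toric}. This immediately shows the block generates $\sigma_A\cap M$, and minimality is inherited from $G(\sigma)$; two lines suffice.

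Your approach instead proves the geometric equality $\Gamma(\sigma_A)=\Gamma(\sigma)\cap\sigma_A$ directly, via supporting functionals for one inclusion and the topological fact that a closed connected subset of an arc containing both endpoints is the whole arc for the other. This is more self-contained in convex geometry and, interestingly, recovers the conclusion of \cref{pr:sub-dcmgs} in dimension two \emph{before} verifying $G$-stability---essentially reversing the paper's logical order. The paper's route is shorter because it leans on a ready-made lattice fact; yours makes the role of two-dimensionality more transparent. One minor imprecision: your closing remark that ``uniqueness of the compact inner boundary'' is what fails in \cref{ex:no-c1} and \cref{ex:no-c2} is not quite right---$\Gamma(\sigma_A)$ remains connected in higher dimension; what actually breaks is the exhaustion step, since a closed connected subset of a $2$-disk containing its boundary need not be the whole disk.
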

\begin{proof}
Firstly, it is well-known that \cref{def:DCMGS} $(i)$ holds \cite[Proposition 1.21]{oda1983convex}.   

Let $G(\sigma)=\{\gamma_1,\ldots,\gamma_n\}$, ordered counterclockwise. 
Let $A=\{\gamma_{i_1},\ldots,\gamma_{i_r}\}\subset G(\sigma)$, where $1\leq i_1<\cdots<i_r\leq n$. 
Then the extremal rays of $\sigma_A$ are generated by $\gamma_{i_1}$ and $\gamma_{i_r}$, respectively. 
We want to show that 
$$G(\sigma_A)=\{\gamma_{i_1},\gamma_{i_1+1},\cdots,\gamma_{i_2},\gamma_{i_2+1},\ldots,\gamma_{i_r}\}=:\Tilde{A}.$$
If $|A|=1$ the equality holds. 
Suppose $|A|\geq2$. 
It is known that any two consecutive elements of $G(\sigma)$ have determinant one \cite[Proposition 10.2.2]{cox2011toric}. 
This implies that $\Tilde{A}$ minimally generates $\sigma_A\cap M$. 
Hence, \cref{def:DCMGS}~$(ii)$ holds.
\end{proof}

Using the previous proposition, we recover \cite[Theorem 2.5]{DJNB}.

\begin{corollary}
The iteration of normalized Nash blowups resolves the singularities of normal toric surfaces over fields of positive characteristic.
\end{corollary}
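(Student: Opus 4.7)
The plan is to reduce the positive characteristic statement to the (known) characteristic zero result by exploiting $G$-stability in dimension two, and then propagate this through the iteration.

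First I would fix a normal toric surface $X(\sigma)$ with $\sigma \subset N_\R$ a two-dimensional strongly convex cone. Since $\sigma^\vee \subset M_\R$ is also two-dimensional, \cref{prop:G-stable-dim-two} applies to $\sigma^\vee$, so $\sigma^\vee$ is $G$-stable. By \cref{cor:Nash-free}, any combinatorial property of the fan $\Sigma_p$ defining the normalized Nash blowup $X(\Sigma_p) \to X(\sigma)$ (in particular, non-singularity of $X(\Sigma_p)$) holds in characteristic $p > 0$ if and only if it holds in characteristic zero. This handles the first step of the iteration.

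To iterate, I would observe that $X(\Sigma_p)$ is obtained as a toric variety via a fan $\Sigma_p$ in $N_\R$ whose maximal cones are again two-dimensional (since $\dim N = 2$). Each affine chart of $X(\Sigma_p)$ is therefore of the form $X(\tau)$ for some two-dimensional cone $\tau$, whose dual $\tau^\vee$ is also two-dimensional and hence $G$-stable by \cref{prop:G-stable-dim-two}. Thus, \cref{cor:Nash-free} applies chart by chart at every stage of the iteration: at each step, the combinatorics of the normalized Nash blowup is identical to the combinatorics of the characteristic zero normalized Nash blowup of the same cone. In particular, after $k$ iterations, the fan obtained in characteristic $p$ coincides (as a fan) with the fan obtained after $k$ iterations in characteristic zero.

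Finally, I would invoke the classical characteristic zero result that iterating normalized Nash blowups resolves the singularities of normal toric surfaces (proved by Gonz\'alez-Sprinberg, see \cite{GS1}). Since the characteristic zero iteration terminates after finitely many steps with a smooth toric variety, and since at every step the fan in characteristic $p$ matches the fan in characteristic $0$, the iteration in characteristic $p$ also terminates in a smooth variety after the same number of steps.

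The main subtlety, and the only place where one must be careful, is checking that the characteristic-independence lifts from a single blowup to the full iteration. This is where the preservation of the hypothesis matters: because we stay in dimension two throughout, \cref{prop:G-stable-dim-two} furnishes $G$-stability automatically at every intermediate cone, so no ad hoc verification of $G$-stability is required after each step; the whole argument collapses to matching two fans step by step against the characteristic zero recipe.
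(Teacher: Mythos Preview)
Your proposal is correct and follows essentially the same approach as the paper: invoke \cref{prop:G-stable-dim-two} to get $G$-stability, apply \cref{th:Nash polyh}/\cref{cor:Nash-free} to match the characteristic $p$ and characteristic $0$ polyhedra, and then cite \cite{GS1} for the characteristic zero resolution. Your version is in fact more explicit than the paper's about why the argument propagates through the iteration (each intermediate chart is again governed by a two-dimensional cone, hence automatically $G$-stable), a point the paper leaves implicit.
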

\begin{proof}
Let $\sigma\subset M_{\R}$ be a two-dimensional cone defining a normal toric surface $X(\sigma)$. Then, by \cref{prop:G-stable-dim-two}, $\sigma$ is $G$-stable. By \cref{th:Nash polyh}, $\mathcal{N}_0(\sigma)=\mathcal{N}_p(\sigma)$ for all $p>0$ prime. The iteration of normalized Nash blowup of a normal toric surface gives a resolution of singularities in characteristic zero \cite[Section 2.3, Th\'eor\`eme]{GS1}. Hence the result follows by  \cref{cor:Nash-free}.
\end{proof}

Now we study 2-generic determinantal varieties. Let us recall their definition. Let $m,n\in \Z_{\geq2}$. Given a generic $ m \times n $ matrix $L$, let $J_2$ be the ideal generated by the $(2\times2)$-minors of $L$. We denote $M^{2}_{m,n}\subset\K^{mn}$ the zero locus of $J_2$. For general properties of generic determinantal varieties we refer to \cite{EagonHochster,BrunsVetter,Sturm}.

The ideal $J_2$ can be described as a toric ideal as follows. Let $ M $ be a lattice with basis $ e_{1}, \dots, e_{m} $ and $ M '$ another lattice with basis $ f_{1}, \dots, f_{n}$. Let $ \Delta_{m-1} = \operatorname{Conv}\left\{ e_{1}, \dots, e_{m} \right\} \subset M_{\R}$ and $ \Delta_{n-1} = \operatorname{Conv} \left\{ f_{1}, \dots, f_{n} \right\} \subset M'_{\R}$. These polytopes are called the \emph{standard} simplices. They have dimensions $ m-1 $ and $ n-1 $, respectively. 

The Cartesian product $ \Delta_{m-1} \times \Delta_{n-1} $ coincides with $ \operatorname{Conv} \left\{ (e_{i},f_{j}) : 1\leq i\leq m,1\leq j\leq n \right\}\subset M_{\R} \oplus M'_{\R} $. Hence, it is also a lattice polytope. Let $\mathcal{A}=\{(e_{i},f_{j}) : 1\leq i\leq m,1\leq j\leq n\}\subset M \oplus M'$ be the set of vertices of $ \Delta_{m-1} \times \Delta_{n-1} $ and $I_{\mathcal{A}}$ the corresponding toric ideal. Then $J_2=I_{\mathcal{A}}$ \cite[Proposition 5.4]{Sturm}. Moreover, it is also known that $\N\A$ is a saturated semigroup and $\A$ is the Hilbert basis of $\sigma^\vee=\con{\A}$ \cite[Corollaries 1.8 and 1.9]{DDR}. It is known that $M^2_{m,n}$ has dimension $m+n-1$. Hence, the lattice $\mathbb{Z} \mathcal{A}$ has rank $m+n-1$. In particular, it is not equal to $M\oplus M'$.

\begin{proposition}\label{prop:product_simplices}
With the previous notation, we have that $\sigma^\vee$ is $G$-stable. 
\end{proposition}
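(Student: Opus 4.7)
The plan is to exploit the fact that all points of $\mathcal{A}$ lie on a common affine hyperplane. First, I would define the linear functional $\gamma_0$ on the lattice $\Z\mathcal{A}$ that sends $v = \sum a_{ij}(e_i, f_j)$ to the total weight $\sum_{i,j} a_{ij}$. This is well defined because the defining relations of $\Z\mathcal{A}$ (coming from the identities $(e_i,f_j)+(e_k,f_l)=(e_i,f_l)+(e_k,f_j)$) preserve total weight, and $\gamma_0\equiv 1$ on $\mathcal{A}$. Since $\mathcal{A}$ is the Hilbert basis of $\sigma^\vee\cap\Z\mathcal{A}$, every non-zero lattice point of $\sigma^\vee$ satisfies $\gamma_0\ge 1$, with equality exactly on $\mathcal{A}$; in particular, the only lattice points of $\operatorname{Conv}(\mathcal{A})$ are the elements of $\mathcal{A}$.

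For \cref{def:DCMGS}~$(i)$, the previous description yields $\Gamma_+(\sigma^\vee)=\operatorname{Conv}(\mathcal{A})+\sigma^\vee$. A standard argument on Minkowski sums of a polytope with its recession cone shows that the compact faces of this polyhedron correspond to linear functionals lying in the interior of $\sigma$, and therefore coincide with the faces of $\operatorname{Conv}(\mathcal{A})$. Hence $\Gamma(\sigma^\vee)=\operatorname{Conv}(\mathcal{A})$, whose lattice points are exactly $\mathcal{A}=G(\sigma^\vee)$.

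For \cref{def:DCMGS}~$(ii)$, write $A=\{(e_i,f_j):(i,j)\in S\}$ for some $S\subseteq [m]\times[n]$. The first sub-step is to observe $\mathcal{A}\cap \operatorname{Cone}(A)=A$: if $(e_a,f_b)=\sum_{(i,j)\in S}\lambda_{ij}(e_i,f_j)$ with $\lambda_{ij}\ge 0$, projecting onto each factor of $M_{\R}\oplus M'_{\R}$ forces $\lambda_{ij}=0$ unless $(i,j)=(a,b)$, and then $(a,b)\in S$. The second, and main, sub-step is to show that the semigroup $\operatorname{Cone}(A)\cap\Z\mathcal{A}$ is generated by $A$. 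Given $v=(\sum x_i e_i,\sum y_j f_j)\in \operatorname{Cone}(A)\cap\Z\mathcal{A}$, the condition $v\in\operatorname{Cone}(A)$ amounts to the non-emptiness of the restricted transportation polytope
$$P_v = \left\{(\mu_{ij})_{(i,j)\in S}\in \R^{S}_{\ge 0}\;:\;{\textstyle\sum_j \mu_{ij}=x_i},\ {\textstyle\sum_i \mu_{ij}=y_j}\right\}.$$
Its defining matrix is a submatrix of a bipartite-graph incidence matrix and is hence totally unimodular, so $P_v$ has integer vertices and, being non-empty, contains an integer point $(n_{ij})$. This gives $v=\sum_{(i,j)\in S}n_{ij}(e_i,f_j)$. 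Since every element of $A$ is indecomposable (it has $\gamma_0=1$), this shows that $A$ is exactly the Hilbert basis of $\operatorname{Cone}(A)\cap\Z\mathcal{A}$, and combined with the first sub-step one obtains $G(\operatorname{Cone}(A))=A=G(\sigma^\vee)\cap \operatorname{Cone}(A)$.

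The only substantial ingredient is the integrality of the $S$-restricted transportation polytope, a classical consequence of the total unimodularity of bipartite-graph incidence matrices. Everything else is bookkeeping with the functional $\gamma_0$ and the elementary observation that the lattice points of $\Delta_{m-1}\times\Delta_{n-1}$ are exactly $\mathcal{A}$.
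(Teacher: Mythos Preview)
Your proof is correct. For condition~$(i)$ you and the paper argue identically: the Hilbert basis $\mathcal{A}$ lies on a single affine hyperplane, so $\Gamma(\sigma^\vee)=\operatorname{Conv}(\mathcal{A})$, whose only lattice points are the elements of $\mathcal{A}$ themselves.

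For condition~$(ii)$ the two arguments rest on the same combinatorial fact about $\Delta_{m-1}\times\Delta_{n-1}$---that the point configuration $\mathcal{A}$ is unimodular---but they access it through different standard formulations. The paper invokes the \emph{equal-volume} version (every full-dimensional simplex on $\mathcal{A}$ has the same normalized volume, equivalently every maximal linearly independent subset of $\mathcal{A}$ is a $\Z$-basis of $\Z\mathcal{A}$) together with Carath\'eodory's theorem: given a lattice point $e\in\operatorname{Cone}(\mathcal{B})$, reduce to a linearly independent subset $\mathcal{B}_0\subset\mathcal{B}$, extend it inside $\mathcal{A}$ to a lattice basis, and read off that the unique coefficients of $e$ are integers. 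You instead use the \emph{total-unimodularity} version directly: membership of $v$ in $\operatorname{Cone}(A)$ is a transportation problem on the bipartite graph with edge set $S$, and since bipartite incidence matrices are totally unimodular the associated polytope $P_v$ has integral vertices whenever the margins $x_i,y_j$ are integers. This bypasses both Carath\'eodory and the extension step, and you also make the auxiliary equality $\mathcal{A}\cap\operatorname{Cone}(A)=A$ explicit, whereas the paper leaves it implicit in the minimality claim. The net effect is the same; your packaging is a bit more self-contained.
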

\begin{proof}

We can assume that $e_i$'s and $f_j$'s are canonical basis elements of $M$ and $M'$, respectively. Letting $y_1,\ldots,y_m$ and $z_1,\ldots,z_n$ be coordinates in $M_{\R}\cong\R^m$ and $M'_{\R}\cong\R^n$, we have that $\A$ is contained in the affine hyperplane $\big\{\sum_{i=1}^my_i+\sum_{j=1}^nz_j=2\big\}\subset M_{\R}\oplus M'_{\R}$. Since $G(\sigma^\vee)=\A$ we obtain that \cref{def:DCMGS} $(i)$ holds.

To show that $\sd$ satisfies \cref{def:DCMGS} $(ii)$, we use the following special property of the product $ \Delta_{m-1} \times \Delta_{n-1} $: the volume of every full-dimensional simplex that can be formed with the elements of $\A$ is the same \cite[Proposition 6.2.11]{de2010triangulations}. Equivalently, letting $D=m+n-1$, every subset of $D+1$ affinely independent elements of $\A$ generates the lattice $\mathbb{Z}\mathcal{A}\cong\Z^{D}$ \cite[Section 2.1]{nill2024unimodularpolytopesnewhellertype}. 

Let $\mathcal{B}\subset\A$. Let $e$ be an element of $\con{\mathcal{B}}\cap\Z^D$. By Caratheodory's theorem in its conic version \cite[Theorem 2.3]{barvinok2002course}, $e$ is a linear combination of a subset $\mathcal{B}_0\subset\mathcal{B}$ of at most $D+1$ elements. We can assume that $e$ belongs to the interior of $\con{\mathcal{B}_0}$. Now extend $\mathcal{B}_0$ to a subset $\mathcal{B}_1\subset\A$ of $D+1$ affinely independent elements as follows. 

Take any $v\in\mathcal{B}_0$. Notice that $\{\mathcal{B}_0-v\}\setminus\{0\}$ is a base of the $\R$-vector space $W=\operatorname{span}_{\R}(\mathcal{B}_0-v)$. Since $W$ is a subspace of $\operatorname{span}_{\R}(\A-v)$, we can complete $\{\mathcal{B}_0-v\}\setminus\{0\}$ to a base of $\beta$ of $\operatorname{span}_{\R}(\A-v)$ using elements of $\{\A-v\}$.
Let $\mathcal{B}_1=\mathcal{B}_0\cup\{\beta+v\}$.

By the property of $ \Delta_{m-1} \times \Delta_{n-1} $ mentioned earlier, $\con{\mathcal{B}_1}$ is regular. In particular, $e$ is an integral combination of $\mathcal{B}_0$. This implies that $\mathcal{B}$ is a generating set of $\con{\mathcal{B}}\cap\Z^D$. Moreover, it is minimal. Thus, \cref{def:DCMGS}~$(ii)$ is satisfied.
\end{proof}

It was proved in \cite[Theorem 3.3]{DDR} that the Nash blowup of $M^2_{m,n}$ (without normalizing) is non-singular in prime characteristic. Using \cref{prop:product_simplices}, we obtain the same result for the normalized Nash blowup.

\begin{corollary}
The normalized Nash blowup of $M^2_{m,n}$ is non-singular over fields of positive characteristic.
\end{corollary}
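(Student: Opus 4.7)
The plan is to combine \cref{prop:product_simplices} with \cref{cor:Nash-free} and a known result in characteristic zero. Concretely, recall that $M^2_{m,n}$ is realized as the normal affine toric variety $X(\sigma)$, where $\sigma \subset N_{\R}$ is the cone whose dual is $\sigma^{\vee} = \operatorname{Cone}(\mathcal{A}) \subset (M\oplus M')_{\R}$, and where $\mathcal{A} = \{(e_i,f_j) \mid 1\leq i\leq m,\ 1\leq j\leq n\}$ is the Hilbert basis of $\sigma^{\vee}$.

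First, I would invoke \cref{prop:product_simplices} to conclude that $\sigma^{\vee}$ is $G$-stable. By \cref{th:Nash polyh}, this gives $\mathcal{N}_0(\sigma)=\mathcal{N}_p(\sigma)$ for every prime $p>0$, and hence the normal fan $\Sigma_p$ of $\mathcal{N}_p(\sigma)$ coincides with $\Sigma_0$ as a combinatorial object. By \cref{th:Nash-blowup}, the normalized Nash blowup of $X(\sigma)$ is the toric morphism $X(\Sigma_p)\to X(\sigma)$, and non-singularity of $X(\Sigma_p)$ is equivalent to every maximal cone of $\Sigma_p$ being regular; this is a purely combinatorial property of the fan. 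Therefore \cref{cor:Nash-free} applies: the normalized Nash blowup of $X(\sigma)$ is non-singular in characteristic $p>0$ if and only if it is non-singular in characteristic zero.

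It then remains to cite the characteristic-zero case. This is established in \cite{EGZ}, where it is shown that the (normalized) Nash blowup of $M^2_{m,n}$ is non-singular over $\C$; equivalently, the normal fan $\Sigma_0$ of $\mathcal{N}_0(\sigma)$ is a regular fan. (Alternatively, one may note that \cite{DDR} already showed that the Nash blowup of $M^2_{m,n}$, without normalizing, is non-singular in characteristic zero; being non-singular, it is automatically normal, so the normalized Nash blowup agrees with it.) Combining this with the previous paragraph yields the statement.

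There is essentially no obstacle beyond the verification of $G$-stability carried out in \cref{prop:product_simplices}; the rest is a formal application of the general machinery of \cref{subsec:Nash free of char}. The delicate point, already handled, is that the equidistribution of volumes of maximal simplices in $\Delta_{m-1}\times \Delta_{n-1}$ forces every subcone of $\sigma^{\vee}$ spanned by elements of $\mathcal{A}$ to have Hilbert basis obtained by intersecting $\mathcal{A}$ with it, which is exactly condition $(ii)$ of $G$-stability.
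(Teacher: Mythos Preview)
Your proposal is correct and follows essentially the same route as the paper's proof: apply \cref{prop:product_simplices} to obtain $G$-stability of $\sigma^{\vee}$, invoke \cref{th:Nash polyh} and \cref{cor:Nash-free} to reduce to characteristic zero, and then cite \cite{EGZ} for the non-singularity of the (normalized) Nash blowup of $M^2_{m,n}$ over $\C$. The only cosmetic difference is that you spell out more explicitly why non-singularity is a combinatorial property of the fan, whereas the paper simply appeals to \cref{cor:Nash-free}.
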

\begin{proof}
Let $\sigma_1^\vee\subset M_{\R}$ be the cone defining $M^2_{m,n}$. By \cref{prop:product_simplices}, $\sigma_1^\vee$ is $G$-stable. Then, by \cref{th:Nash polyh}, $\mathcal{N}_0(\sigma)=\mathcal{N}_p(\sigma)$ for all $p>0$ prime. The Nash blowup of $M^2_{m,n}$ is non-singular in characteristic zero \cite[Section 1]{EGZ}. In particular, this is also true for its normalization. Hence the result follows by \cref{cor:Nash-free}.
\end{proof}

\section{G-flat Hilbert bases and one-step resolution}\label{section:G flat}

In this section we construct a family of cones which are not necessarily $G$-stable but for which we also have the independence of the characteristic of its normalized Nash blowup. 
Notice that several of the examples in previous sections have their Hilbert basis contained in a hyperplane (see \cref{ex:surf-one-segment}, \cref{ex:polygon}, and \cref{ex:thms-1}). We will further exploit this condition. Our analysis requires the study of some properties of lattice polytopes, that is, a convex polytope whose vertices all have integer coordinates. Let us start with some basic definitions on this context.

\begin{definition}\label{def:unimodular}
    
Two lattice polytopes are \emph{unimodularly equivalent} if one can be transformed into the other by an affine map whose linear part is unimodular, i.e., a map of the form $T(x)=Ax+b$, where $A$ is an integer matrix with determinant $\pm 1$, and the entries of $b$ are integers. 
If $P$ and $Q$ are unimodularly equivalent, then their respective normal fans are isomorphic as fans.

A $d$-simplex is \emph{unimodular} if it is unimodularly equivalent to the standard $d$-simplex $S_d$ given by $\operatorname{Conv}(0,e_1,\ldots,e_d)\subset\R^d$. A \emph{unimodular triangulation} of $P$ is a triangulation of $P$ into simplices such that the union of these simplices exactly covers $P$, their intersections are either empty or common faces, and each simplex is unimodular. 
\end{definition}

Given a lattice polytope $P\subset M_{\R}$, we embed $P$ in $M_{\R}\times\R$ by adding 1 as an extra coordinate and we define the cone
\begin{equation}\label{eq:omega}
     \omega_{P} := \operatorname{Cone} \left\{ (p,1) \mid p \in P \right\} \subset M_{\mathbb{R}}\times \mathbb{R}.
\end{equation}
Remark that, if $P$ and $Q$ are unimodularly equivalent lattice polytopes, then $\omega_P$ is isomorphic to $\omega_Q$ as cones. Indeed, if the affine map $T(x)=Ax+b$ maps $P$ to $Q$ then the map 
$$M_\mathbb{R}\times \mathbb{R}\to M_\mathbb{R}\times \mathbb{R} \quad\mbox{given by}\quad (x,0)\mapsto (Ax,0) \mbox{ and } (0,1)\mapsto (b,1)$$ 
is an automorphism of $M_{\R}\times\R$ mapping $\omega_P$ to $\omega_Q$. 

\begin{definition}\label{def:G-flat}
Let $P \subset M_{\R}$ be a lattice polytope. We say that $P$ is \emph{G-flat} if the Hilbert basis of $ \omega_{P}$, $G(\omega_{P})$, is contained in the hyperplane $ \left\{ (x,1)\in M_{\R}\times\R \mid x\in M_{\mathbb{R}} \right\} $. If $P$ is $G$-flat, then the Hilbert basis of $\omega_P$ is $\{ (m,1) \mid m \in P\cap M\}$.
\end{definition}

There is one more definition we need.

\begin{definition}\label{def:smooth}
Let $P \subset M_{\R}$ be a lattice polytope. 
We say that $P$ is \emph{smooth} if its normal fan is regular, i.e.,  if for every vertex $v\in P$, the cone $\sigma_v^\vee := \operatorname{Cone}\{p-v \mid p\in P\}$ is regular. 
The cone $\sigma_v^\vee$ is called the \emph{feasible cone} of $P$ at $v$ and it is denoted $\operatorname{fcone}(P,v)$.
\end{definition}

\begin{remark}\label{rem:feasible_rem}
It is known that the minimal generators for the feasible cone $\operatorname{fcone}(P,v)$ are the edges (both bounded and unbounded) adjacent to $v$.
\end{remark}

\begin{example}[Relations between definitions]\label{ex:relations}
The following are some relations between the named properties of lattice polytopes.
\begin{description}
    \item[Unimodular triangulation $\Longrightarrow$ $G$-flat] Since $P$ admits a unimodular triangulation, it follows that $\omega_P$  admits a regular subdivision, $\Sigma$, such that $\Sigma(1) = \{ (m, 1) \mid m \in P \cap M \}$. From this, we obtain that $G(\omega_P) = \{ (m, 1) \mid m \in P \cap M \}$.
    \item[Unimodular triangulation $\centernot\Longrightarrow$ $G$-stable] 
Consider the three dimensional cube $ [0,1]^{3} $. It has a unimodular triangulation.	Indeed, the six tetrahedra defined by
	\begin{equation}
		T_{\pi} = \operatorname{Conv} \left\{ ( x_{1}, x_{2}, x_{3} ) \in \mathbb{R}^{3} ~:~ 0\leq x_{\pi(1)} \leq x_{\pi(2)} \leq x_{\pi(3)} \leq 1 \right\},
	\end{equation}
for each permutation $ \pi $ of $ \left\{ 1,2,3 \right\}$, form a unimodular triangulation. However it is not $G$-stable as it fails \cref{def:DCMGS}~$(ii)$: the subcone of $ \omega_{P} $ spanned by the vectors \linebreak $(1,0,0,1), (0,1,0,1), (0,0,1,1), (1,1,1,1) $ has the element $(1,1,1,2)$ in its Hilbert basis, which is not in the Hilbert basis of $ \omega_{P} $ because its last coordinate is 2.

    \item[Smooth $\centernot \Longrightarrow$ $G$-stable] Same as above since the cube is smooth.
    \item[$G$-flat $\centernot \Longrightarrow$ smooth] Every lattice polygon has unimodular triangulation, hence is $G$-flat. This happens even if they are not smooth.
\end{description}
On the other hand, it is not known whether smooth implies $G$-flat.
This is an open question in dimensions three and higher \cite[Section 1.5.1]{haase2021existence}\footnote{What we call G-flat is property (7) in the hierarchy of definitions in \cite[Section 1.2.5]{haase2021existence}}.
It is also open whether smooth polytopes always have a unimodular triangulation.
\end{example}

In order to avoid introducing excessive notation, we will use $\npp$ on this section (compare with~\eqref{eq:N_polyhedron}). Our next goal is to prove that the smoothness and $G$-flatness of $P$ is a sufficient condition for having $\npc=\npp$ for all $p>0$ (see \cref{cor:char_free}). Moreover, this result will allow us to show that the normalized Nash blowup of the toric variety defined by $\omega_{P}$ is non-singular over arbitrary characteristic fields under some conditions on $P$ (see \cref{thm:altura1}). These results will require an analysis of barycentric hulls of lattice polytopes. This is the content of the following section.

\subsection{Barycentric hull}

In this section we define and derive some results about the barycentric hull of a lattice polytope. These results are important for \cref{ssec:snipe} although they may be of independent interest.

\begin{definition} \label{def:barycentric-hull}
Let $\{ v_1, \dots, v_m \} $ be the vertices of a $d$-polytope $P$.
The \emph{barycenter} of $P$ is the point $\frac{1}{m} \sum_i v_i$.
The \emph{barycentric hull} $ \mathsf{B}(P) $ of a lattice $ d $-polytope $P$ is the convex hull of the barycenters of all possible $ d $-simplices with vertices in $ P \cap M $ .
\end{definition}

\begin{figure}
\begin{minipage}[c]{0.49\linewidth}
	\centering
\includegraphics[width=0.6\linewidth]{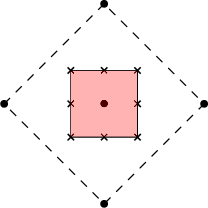}
\caption{Barycentric hull of the rhombus (the points in $P\cap M$ are dots and the barycenters are crosses, the origin is not a barycenter).}
\label{fig:rombo}
\end{minipage}
\begin{minipage}[c]{0.49\linewidth}
	\centering
\includegraphics[width=0.6\linewidth]{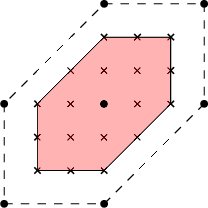}
\caption{Barycentric hull of a smooth polygon (the points in $P\cap M$ are dots and the barycenters are crosses, the origin is a barycenter).}
\label{fig:smooth}
\end{minipage}
\end{figure}

\begin{example}\label{ex:new rhombus}
   Let $P$ be the rhombus as in \cref{fig:rombo}. It is the convex hull of the points given by the columns of the matrix
   \[
   \begin{bmatrix}
       1&-1&0&0 \\ 0&0&1&-1
   \end{bmatrix}.
   \]
    We depict its barycentric hull in \cref{fig:rombo}.
    Notice that the convex hull of the barycenters is a smooth square, whereas the original rhombus is not smooth. Notice that $P$ is $G$-flat.
\end{example}

\begin{example}
    Now consider a smooth polygon as in \cref{fig:smooth}.
    It is the convex hull of the points given by the columns of the matrix
   \[
   \begin{bmatrix}
       0&1&1&0&-1&-1&0 \\ 0&0&1&1&0&-1&-1
   \end{bmatrix}.
   \]
    In this case there are seven lattice points which leads to 35 triples, but three of those triples are collinear points.
    So there are 32 non-degenerate triangles, although some have the same barycenter.
    The origin is both a lattice point of $P$ and a barycenter of two triangles.

\end{example}

We define a \textit{corner} of a smooth $d$-polytope $P$ to be the unimodular simplex obtained as the convex hull of the $(d+1)$-tuple $ (v_{0},\dots, v_{d}) $ of lattice points in $ P\cap M$, where $ v_{0} $ is a vertex of $P$, and $ v_{1}, \dots, v_{d} $ are the nearest lattice points to $ v_{0} $ laying on the $d$ edges adjacent to $v_0$ (see \cref{fig:corners}).
If $ P $ is a unimodular simplex, then $ P $ is itself its only corner. 
If $P$ is not a unimodular simplex, then there is a different corner for each vertex of $P$.
We denote by $ \mathcal{C}(v) $ the barycenter of the corner associated to the vertex $ v \in P $.

\begin{figure}
    \centering
    \includegraphics[width=0.2\linewidth]{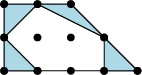}
    \caption{A polygon with its corners highlighted.}
    \label{fig:corners}
\end{figure}

\begin{theorem}\label{thm:baryhull}
Let $P$ be a smooth lattice polytope. Then its barycentric hull $\mathsf{B}(P)$  is the convex hull of the barycenters of the corners of $P$.

Furthermore, if $P$ is a unimodular simplex, then $\mathsf{B}(P)$ is a point. 
Else, if $P$ is not a unimodular simplex, then the function $ \mathcal{C}: \operatorname{Verts}(P) \to \operatorname{Verts}( \mathsf{B}(P)) $, $v\mapsto \mathcal{C}(v)$, is a bijection,
and the edge between $ v $ and $ w $ is parallel to the edge between $ \mathcal{C}(v) $ and $ \mathcal{C}(w) $.
As a consequence, $P$ and $\mathsf{B}(P)$ have the same collection of feasible cones.
More precisely,
\begin{equation}\label{eq:fcones}
    \operatorname{fcone}(P,v) = \operatorname{fcone}(\mathsf{B}(P), \mathcal{C}(v)).
\end{equation}
In particular, $\mathsf{B}(P)$ is a smooth polytope.
\end{theorem}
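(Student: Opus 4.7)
\emph{Paragraph 1.} I start by recognizing $\mathcal C(v)$ as an inward-shifted vertex. Smoothness at $v$ means the primitive edge vectors $e_i := v_i - v$ form a $\Z$-basis of $M$, dual to the primitive inner normals $n_1,\dots,n_d$ of the $d$ facets of $P$ through $v$. Therefore $\mathcal C(v) = v + \tfrac{1}{d+1}\sum_i e_i$ satisfies $\langle n_k, \mathcal C(v) - p_k\rangle = \tfrac{1}{d+1}$ for every facet $F_k$ through $v$ (any $p_k \in F_k$), so $\mathcal C(v)$ sits at the intersection of the $d$ hyperplanes parallel to those facets and pushed inward by $\tfrac{1}{d+1}$. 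Let $P^- := \{x \in M_\R : \langle n, x - p\rangle \geq \tfrac{1}{d+1}\text{ for every facet of }P\}$ denote the associated inward-shifted region.

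\emph{Paragraph 2.} The key lattice lemma is $\mathsf B(P) \subseteq P^-$. For any facet $F$ of $P$ with primitive inner normal $n$ and base point $p$, and any $d$-simplex with lattice vertices $w_0,\dots,w_d \in P \cap M$, each $\langle n, w_j - p\rangle$ is a non-negative integer, and these cannot all vanish, for otherwise the $d+1$ affinely independent $w_j$'s would lie in the $(d-1)$-dimensional facet $F$. Hence their sum is at least $1$ and the barycenter $b$ satisfies $\langle n, b - p\rangle \geq \tfrac{1}{d+1}$, so $b \in P^-$. Meanwhile each $\mathcal C(v)$ lies in $\mathsf B(P)$ as the barycenter of the corner simplex at $v$ (a valid $d$-simplex because $v,v+e_1,\dots,v+e_d$ are affinely independent by smoothness). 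Combining, $\operatorname{Conv}\{\mathcal C(v) : v \in \operatorname{Verts}(P)\} \subseteq \mathsf B(P) \subseteq P^-$.

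\emph{Paragraph 3.} If $P$ is a unimodular simplex, then $P \cap M = \operatorname{Verts}(P)$: the only $d$-simplex is $P$ itself, $\mathsf B(P)$ is a single point, and a direct computation shows $P^-$ collapses to that same point, forcing all $\mathcal C(v)$ to coincide with it. Otherwise, the remaining task is to show $P^-$ is full-dimensional. This is the \textbf{main obstacle} of the argument: I would tackle it by arguing that the lattice inradius of a smooth $d$-polytope strictly exceeds $\tfrac{1}{d+1}$ whenever $P$ is not a unimodular simplex, for example by using any lattice point of $P$ outside the set of corners to build a $d$-simplex whose barycenter lies in the relative interior of $P^-$. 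Once $P^-$ is $d$-dimensional, parallel facet shifts preserve the normal fan (below the inradius), so $P^-$ has the same normal fan as $P$; its vertices are precisely the intersections of $d$ shifted facets corresponding to the vertices of $P$, namely the $\mathcal C(v)$'s. This identifies $P^- = \operatorname{Conv}\{\mathcal C(v)\}$, and the chain in Paragraph 2 collapses to $\mathsf B(P) = \operatorname{Conv}\{\mathcal C(v)\} = P^-$. The bijectivity of $\mathcal C$, the parallelism of corresponding edges (coming from the shared codimension-one cones of the normal fan), and the feasible cone identity \eqref{eq:fcones} then follow from the normal fan agreement.
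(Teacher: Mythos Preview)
Your sandwich inclusion $\operatorname{Conv}\{\mathcal C(v)\}\subseteq\mathsf B(P)\subseteq P^-$ is clean and genuinely different in packaging from the paper's proof. The paper works one vertex at a time: placing $v$ at the origin with corner $S_d$, it first proves an auxiliary lemma (for every $i$ there exists $j$ with $e_i+e_j\in P$, shown by induction on $d$), uses it to produce a second barycenter $\mathbf q_i=\mathcal C(v)+\tfrac{1}{d+1}e_i\in\mathsf B(P)$, and combines this with the observation that every barycenter has all coordinates $\geq\tfrac{1}{d+1}$ to conclude that $\mathcal C(v)$ is a vertex of $\mathsf B(P)$ with feasible cone exactly the positive orthant. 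It then tracks each edge to the corner-barycenter at the far end and finishes by connectivity of the edge graph. Your inequality $\mathsf B(P)\subseteq P^-$ is the global form of the paper's ``all coordinates $\geq\tfrac{1}{d+1}$'' step; where the two arguments diverge is in how the \emph{opposite} containment of feasible cones is obtained.

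That is where your outline has a real gap. The implication ``$P^-$ is $d$-dimensional $\Rightarrow$ $P^-$ has the same normal fan as $P$'' is false: for a simple polytope, uniform inward lattice shifts can kill a facet long before the polytope stops being full-dimensional (take a large square with one corner truncated at lattice distance $1$; for shifts $t>1$ the diagonal facet becomes redundant while the remaining square is still full-dimensional). What you genuinely need is that no \emph{edge} collapses under the shift by $\tfrac1{d+1}$, i.e.\ that $\mathcal C(v)\neq\mathcal C(w)$ whenever $v,w$ are adjacent in $P$; granted that, each shifted facet really is a facet of $P^-$ and the normal-fan identification goes through. Your proposed fix does not touch this point, and even the weaker full-dimensionality claim is not established by your method: for $P=[0,1]^2$ every lattice triangle has barycenter on $\partial P^-=\partial\big([1/3,2/3]^2\big)$, so no barycenter lies in the interior. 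Proving the no-edge-collapse statement for a smooth $P$ that is not a unimodular simplex is exactly the content of the paper's auxiliary lemma, so your route and the paper's converge at the same non-trivial step --- one your outline flags but leaves open.
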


Before proving the theorem, let us state the following consequence.

\begin{corollary}\label{cor:char_free}
Let $P \subset M_{\R}$ be a smooth and $G$-flat $d$-lattice polytope. 
We have $\npc=\npp$ for every prime $ p $.
\end{corollary}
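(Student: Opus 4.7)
The plan is to use $G$-flatness to rewrite $\npp$ as a single Minkowski sum whose only non-trivial combinatorial content is encoded in a polytope $A_p$, and then identify $A_0$ with the barycentric hull of $P$ so that \cref{thm:baryhull} finishes the job.

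First, I would unpack the hypothesis that $P$ is $G$-flat. Every element of $G(\sip)$ has the form $(m,1)$ with $m \in P \cap M$. Consequently, for any choice of a $(d+1)$-tuple $h_i = (m_i,1) \in G(\sip)$, the sum $h_1 + \cdots + h_{d+1}$ equals $((d+1)\bar m_S,\, d+1)$, where $\bar m_S$ is the Euclidean barycenter of the lattice simplex $S = \operatorname{Conv}(m_1,\ldots,m_{d+1})$; moreover, expanding $\det(h_1\,\cdots\,h_{d+1})$ along the bottom row of $1$'s shows that it equals, up to sign, $\det(m_2-m_1,\ldots,m_{d+1}-m_1)$, i.e.\ the normalized volume of $S$. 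Thus $\det_p(h_1\cdots h_{d+1}) \neq 0$ is equivalent to $S$ being a non-degenerate $d$-simplex whose normalized volume is nonzero modulo $p$.

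Next, since $\sip$ contains no point with last coordinate $0$ other than the origin, every base point $h_1+\cdots+h_{d+1}$ sits in the slice at height $d+1$. Using the elementary identity $\operatorname{Conv}\bigl(\bigcup_i (v_i + \sip)\bigr) = \operatorname{Conv}\{v_i\} + \sip$, I would rewrite the definition of $\npp$ as
\[
\mathcal{N}_p(\sip) \;=\; \bigl((d+1)\,A_p \times \{d+1\}\bigr) + \sip,
\]
where
\[
A_p \;=\; \operatorname{Conv}\bigl\{\,\bar m_S \;\bigm|\; S \text{ a } d\text{-simplex with vertices in } P\cap M,\; \operatorname{vol}_{\mathrm{norm}}(S)\not\equiv 0 \pmod{p}\,\bigr\}.
\]
Therefore the statement $\npc = \npp$ is equivalent to the equality $A_0 = A_p$; the inclusion $A_p \subseteq A_0$ is automatic since non-degeneracy in characteristic $p$ implies non-degeneracy in characteristic $0$.

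The heart of the argument is the reverse inclusion $A_0 \subseteq A_p$. By definition $A_0 = \mathsf{B}(P)$, and since $P$ is smooth, \cref{thm:baryhull} pins down the vertices of $\mathsf{B}(P)$ as exactly the barycenters $\mathcal{C}(v)$ of the corners of $P$. Because each corner is a unimodular simplex, its associated determinant is $\pm 1$, which is nonzero modulo every prime $p$; hence each vertex of $A_0$ already lies in $A_p$, which gives $A_0 \subseteq A_p$ and completes the argument. The main obstacle is entirely absorbed into \cref{thm:baryhull}: once the extreme points of the barycentric hull have been identified with barycenters of unimodular simplices, the corollary follows by a clean convex-geometric unfolding of the definitions.
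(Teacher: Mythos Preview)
Your proposal is correct and follows essentially the same approach as the paper: both use $G$-flatness to identify the Hilbert basis with $\{(m,1)\mid m\in P\cap M\}$, recognize that the vertices of $\npc$ are governed by the barycentric hull $\mathsf{B}(P)$, and then invoke \cref{thm:baryhull} to see that these vertices come from unimodular corners whose determinants are $\pm 1$, hence survive modulo every prime. Your write-up is a bit more explicit about the Minkowski-sum decomposition $\npp=((d+1)A_p\times\{d+1\})+\sip$ and the identification of the determinant with the normalized volume, but the substance is the same.
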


\begin{proof}
	Since $P$ is $G$-flat, the Hilbert basis of $\omega_P$ is $\{(v,1)|v\in P\cap M\}$. Then, it follows from the definitions that the vertices of $ \npc $ are the vertices of the polytope $ (d+1)(\mathsf{B}(P),1) \subseteq M_{\R}\times \R $.
	By \cref{thm:baryhull}, these vertices are the sum of the corners of $ (P,1) $.
	Since $ P $ is smooth, for each corner $ \left\{ v_{0}, \dots, v_{d} \right\} $ the set $ \left\{ (v_{0},1) , \dots, (v_{d},1) \right\} $ is a basis for $M\times\Z$.
	Hence the associated determinants are $ \pm 1 $.
	The conclusion follows.
\end{proof}

For the proof of the theorem we need some basic auxiliary lemmas about smooth polytopes. 
Remark that, up to unimodular equivalence, all $d$-dimensional unimodular simplices are equivalent to the standard simplex 
$ S_d \subseteq \mathbb{R}^d$.

\begin{lemma}\label{lem:aux_smooth_simplices}
If $P$ is a smooth $d$-simplex, then, up to unimodular equivalence, $P$ is equivalent to $k\cdot S_d$ for some positive integer $k$.
\end{lemma}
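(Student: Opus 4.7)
My plan is to exploit smoothness at two different vertices to pin down the structure of $P$. Let $v_0, v_1, \dots, v_d$ denote the vertices of $P$. By translating, I may assume $v_0 = 0$. By \cref{rem:feasible_rem}, the feasible cone $\operatorname{fcone}(P, v_0)$ has as minimal generators the primitive vectors $w_i$ along the edges $[v_0, v_i]$, so I can write $v_i = k_i w_i$ where $k_i \in \Z_{\geq 1}$ is the lattice length of the edge $[v_0,v_i]$. Smoothness at $v_0$ says that $(w_1, \ldots, w_d)$ is a $\Z$-basis of $M$, so by applying the unimodular automorphism sending $w_i \mapsto e_i$ I reduce to the case $v_i = k_i e_i$ for each $i = 1, \ldots, d$.

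Next, I would extract divisibility information from smoothness at $v_1 = k_1 e_1$. The edges from $v_1$ have direction vectors $-k_1 e_1$ (toward $v_0$) and $k_i e_i - k_1 e_1$ for $i \geq 2$ (toward $v_i$), whose primitive generators are $-e_1$ and $(k_i e_i - k_1 e_1)/\gcd(k_i, k_1)$ respectively. Assembling these $d$ vectors as the columns of a $d\times d$ matrix, one obtains an upper triangular matrix whose diagonal entries are $-1, k_2/\gcd(k_2,k_1), \ldots, k_d/\gcd(k_d,k_1)$. Regularity of $\operatorname{fcone}(P, v_1)$ requires the determinant to be $\pm 1$, which forces $k_i \mid k_1$ for every $i \geq 2$.

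Finally, applying the identical argument at each vertex $v_j$ gives $k_i \mid k_j$ for all $i \neq j$. These mutual divisibility relations collapse to $k_1 = k_2 = \cdots = k_d =: k$, and hence $P = \operatorname{Conv}(0, k e_1, \ldots, k e_d) = k\cdot S_d$ in the chosen coordinates. The only genuinely delicate step is computing primitive vectors at a non-apex vertex and verifying the triangular form of the matrix; everything else is linear algebra and elementary number theory, so I do not anticipate any serious obstacle.
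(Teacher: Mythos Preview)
Your proof is correct and follows the same strategy as the paper: normalize so that $v_0=0$ and $v_i=k_ie_i$, then read off constraints from regularity of the feasible cone at another vertex. Your version is in fact more careful than the paper's: the paper asserts that regularity at $v_1=k_1e_1$ alone forces $k_i=k_1$ for all $i\ge 2$, but as you correctly compute via the primitive edge directions, it only forces $k_i\mid k_1$ (for instance, the triangle with vertices $0,\,2e_1,\,e_2$ is regular at $0$ and at $2e_1$ but not at $e_2$), so one genuinely needs the symmetry over all vertices to obtain mutual divisibility and hence equality.
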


\begin{proof}
Let $v \in P$ be a vertex. Applying a unimodular transformation we can assume that $v=0$ and all the other vertices are $k_1e_1, \dots, k_de_d$ for some positive integers $ k_1, \dots, k_d$. The feasible cone at the vertex $k_1e_1$ is generated by  $-k_1e_1$ and $k_ie_i-k_1e_1$. Such cone is regular if and only if $k_i=k_1=:k$ for all $i\geq 2$.
\end{proof}

\begin{lemma}\label{lem:aux_pyramid}
Let $P$ be a smooth lattice polytope which is not the standard simplex.
Furthermore, assume that the standard simplex $S_d$ is a corner of $P$, for some $d\geq2$.
Then for every $i$ there exists a $j$ such that $e_i + e_j \in P$.
\end{lemma}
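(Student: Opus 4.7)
Fix $i$. Let $v_i = k_i e_i$ be the vertex of $P$ reached by the edge from $0$ in direction $e_i$, so $k_i \geq 1$. If $k_i \geq 2$, then the segment $[0,k_i e_i]\subseteq P$ already contains $2e_i$, so taking $j=i$ finishes the argument. Hence from now on I assume $k_i = 1$, meaning $e_i$ itself is a vertex of $P$.

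The core of the plan is to extract the structure of the edges at $e_i$ from smoothness together with the inclusion $P\subseteq \R_{\geq 0}^d$ (the latter is immediate from the corner $S_d$ at $0$: the feasible cone there is $\R^d_{\geq 0}$). Letting the primitive edge directions at $e_i$ other than $-e_i$ be $w_1,\dots,w_{d-1}$, smoothness gives that $\{-e_i,w_1,\dots,w_{d-1}\}$ is a $\Z$-basis of $\Z^d$, while $P\subseteq\R^d_{\geq 0}$ forces $(w_m)_\ell \geq 0$ for every $\ell\neq i$. Moreover, since $e_i$ is a simple vertex and each of its edges lies in exactly $d-1$ facets, and since $\{x_i=0\}$ does not meet $e_i$, a facet-counting argument pins down each $w_m$ to have at most one nonzero coordinate apart from the $i$-th one. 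Combining with the $\Z$-basis condition forces
\[
w_m = e_{j_m} + d_m\, e_i, \quad\text{where }\{j_1,\dots,j_{d-1}\}\text{ is a permutation of }\{1,\dots,d\}\setminus\{i\},
\]
and each $d_m\in\Z$; the requirement that the edge from $e_i$ actually reach a lattice vertex while staying in the orthant yields $d_m\geq -1$.

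I then finish by a case split on the integers $d_m$. If some $d_m=0$, the first lattice point along the corresponding edge is precisely $e_i+e_{j_m}$, so it lies in $P$. If some $d_m\geq 1$, that edge terminates at a vertex $V_m = (1+kd_m)e_i + k\,e_{j_m}$ for some integer $k\geq 1$, and the explicit identity
\[
e_i + e_{j_m} = \frac{k-1}{k(d_m+1)}\,e_i \;+\; \frac{d_m}{d_m+1}\,e_{j_m} \;+\; \frac{1}{k(d_m+1)}\,V_m,
\]
whose coefficients are nonnegative and sum to $1$, exhibits $e_i+e_{j_m}\in\operatorname{Conv}(e_i,e_{j_m},V_m)\subseteq P$; here $e_{j_m}\in P$ is automatic because $e_{j_m}$ lies on $[0,k_{j_m}e_{j_m}]\subseteq P$. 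The only remaining possibility is $d_m=-1$ for every $m$. In that situation every edge at $e_i$ terminates at some $e_{j_m}$, the $d$ facets of $P$ meeting $e_i$ are precisely $\{x_\ell=0\}_{\ell\neq i}$ together with $\{x_1+\cdots+x_d=1\}$, and since each of these is a supporting hyperplane of $P$ we get $P\subseteq S_d$; combined with $S_d\subseteq P$ this forces $P=S_d$, contradicting the hypothesis.

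The step I expect to be the main obstacle is the structural statement that $w_m = e_{j_m} + d_m e_i$: it requires combining smoothness (a $\Z$-basis condition), the orthant constraint (nonnegativity), and the local facet combinatorics at the simple vertex $e_i$. Once that is in hand, the three sub-cases are short, and only the ``all $d_m=-1$'' case actually uses the hypothesis $P\neq S_d$.
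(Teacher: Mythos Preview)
Your proof is correct and follows a genuinely different route from the paper's. The paper argues by induction on $d$: the base case $d=2$ shows directly that $e_1+e_2\notin P$ forces $P=S_2$; the inductive step, assuming $e_i+e_j\notin P$ for every $j\neq i$, restricts to each two-dimensional face $G_j=P\cap\{x_k=0:k\neq i,j\}$, applies the base case to get $G_j=S_2$, deduces that the neighbours of $e_i$ in $P$ are exactly $0,e_1,\dots,\widehat{e_i},\dots,e_d$, and then invokes the fact that a simple pyramid must be a simplex together with the preceding lemma (smooth simplices are dilates of $S_d$) to conclude $P=S_d$. Your argument avoids both the induction and the appeal to the classification of smooth simplices: the facet count at the simple vertex $e_i$ (the $d$ facets there are $\{x_\ell=0\}_{\ell\neq i}$ plus one further facet $F$) combined with the $\Z$-basis condition forces each remaining primitive edge direction to be $e_{j_m}+d_me_i$, and then the sign of $d_m$ either yields $e_i+e_{j_m}\in P$ explicitly via your convex-combination identity, or (when every $d_m=-1$) identifies $F$ with $\{\sum_\ell x_\ell=1\}$, trapping $P$ inside $S_d$ directly from the supporting inequalities. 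Your approach is more constructive and self-contained; the paper's is shorter to write once the two-dimensional case is in hand, at the price of invoking the auxiliary lemma on smooth simplices.
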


\begin{proof}
We argue by induction on $d$.
Let $d=2$. 
Since $S_2$ is a corner, there are vertices of the form $me_1$ and $ne_2$ for some positive integers $m,n$.
If $e_1+e_2 \notin P$, then by convexity, $P$ contains no elements of the form $a_1e_1+a_2e_2$ with integers $a_1,a_2>0$. 
It follows that $P$ has only three vertices: the origin, and one on each axis, hence it is a triangle.
By \cref{lem:aux_smooth_simplices}, it is a multiple of $S_2$, but the only multiple which does not contain $e_1+e_2$ is $S_2$ itself proving the lemma for $d=2$.

Now assume the lemma holds for smooth polytopes of dimension less than $d$.
Since $S_d$ is a corner, there are vertices of the form $n_ke_k$ for every $k$. 
Assume that there exists $i$ such that  $e_i + e_j\notin P$ for all $j\neq i$. 

For every $j \neq i$ consider the two-dimensional face $G_j : = P \cap \{ x \in \mathbb{R}^d \mid x_k = 0, \forall k \neq i,j \} $.
The polygon $G_j$ is a smooth polytope of dimension $2<d$ having $\operatorname{Conv}(0,e_i,e_j)=S_2$ as a corner. Moreover, $e_i+e_j\notin G_j$. By the inductive hypothesis we conclude that $G_j$ is the standard simplex $S_2$. It follows that the vertices of $G_j$ are $\{0, e_i, e_j \}$. Consequently, $P$ has vertices $\{0,e_1, \dots, e_d\}$ and the neighbors of $e_i$ are exactly $\{0,e_1, \dots, e_d\} \setminus \{e_i\}$; there cannot be more neighbors since a smooth polytope has to be simple.

By hypothesis, $P$ is contained in the positive orthant. We conclude that it is a pyramid with apex $e_i$ over the facet $F = P \cap \{ x \in \mathbb{R}^d \mid x_i = 0 \} $.
For a pyramid to be a simple polytope we must have that the base $F$ is simplex, and so $P$ itself is a simplex.
It follows from \cref{lem:aux_smooth_simplices} that it is a multiple of $S_d$.
Since it contains $e_i$ the scalar multiple must be equal to one, proving the lemma.
\end{proof}

We are now ready to prove the main result of this section. 

\begin{proof}[Proof of \cref{thm:baryhull}]
Let $ d = \dim(P) $.
If $d=1$, $P$ is just a closed segment in $\R$, its corners are the segments joining the vertices of $P$ with their nearest integers inside $P$ and the theorem follows. 
Hence in the sequel we assume $d>1$. 
 If $P$ is unimodularly equivalent to $S_{d}$ then there is a single corner and the theorem follows. 
We assume in the sequel that $P$ is not unimodular equivalent to $S_{d}$.

We argue one vertex at the time. 
We may and will assume that $ v=0 $ and, since $P$ is smooth, that its corner is the standard simplex $S_d$. We first show that the barycenter of this corner, namely $ \mathbf{p} = \frac1{d+1} ( e_{1} + \dots + e_{d}) $, is a vertex of $\mathsf{B}(P)$.
Notice that $P$ lies in the positive orthant.

Let $i\in \{1,\dots,d\}$. Let $ j\neq i$ be such that $ e_{i}+e_{j} \in P $, which exists by \cref{lem:aux_pyramid}. 
Hence, replacing $e_j$ by $e_i+e_j$ we have that 
\begin{align*}
  \mathbf{q}_i = \frac1{d+1} (e_{1} + \dots + e_{i-1}+2e_i+e_{i+1}+\dots + e_{d} )
\end{align*}
is the barycenter of another $d$-simplex with vertices in $P\cap M$. 
Since $P$ lies in the positive orthant, the barycenter of every $d$-simplex with vertices in $P\cap M$ has all its coordinates greater or equal than $ \frac{1}{d+1}$.
The entries of $\mathbf{p}$ and $\mathbf{q_i}$ are the same and equal to $\frac{1}{d+1}$ except at the $i$th entry. 
We conclude that $ \mathbf{p}$ and $\mathbf{q}_i$ lie on an edge $E_i$ of the barycentric hull of $P$.
The direction of $E_i$ is given by $e_{i}$ and $\mathbf{p}$ is one of its vertices. 
In particular, $\mathbf{p}$ is a vertex of $\mathsf{B}(P)$.

Let now $ a $ be the largest integer such that $ ae_{i} \in P $. 
The point $ ae_{i} $ is a vertex of $P$. 
We claim that the other vertex  of the edge $E_i$ is the barycenter of the corner of $ae_{i}$. 

Indeed, let $\mathscr{T}$ be the set of all $d$-simplices with vertices in $P\cap M$ such that its barycenter has coordinates in $ e_{1},\dots, e_{i-1},e_{i+1}\dots, e_{d} $ equal to $\frac1{d+1}$. This set is nonempty as it contains $S_d$, so it has an element $T$ with maximal $e_i$ coordinate. Clearly, the other vertex  of the edge $E_i$ is the barycenter of $T$. We prove that $T$ is the corner of $ ae_{i} $.

Fix $ j\neq i$.	
Since the $j$-coordinate of the barycenter of $T$ is $\frac1{d+1}$, we have that $ T $ has $ d $ of its vertices in the coordinate hyperplane $ H_{j} := \left\{ x \in \mathbb{R}^{d} \mid x_{j} = 0 \right\} $ and the last vertex is in the hyperplane $ \left\{ x \in \mathbb{R}^{d} \mid x_{j} = 1 \right\} $. 
Since this is true for every $ j\neq i$, we have that $ d-1  $ vertices of  $ T $ are of the form $ c_{j}e_{i} + e_{j} $ for some nonnegative integer $ c_{j} $. 
The other two vertices must be of the form $ c_0e_{i} $ and $ c_ie_{i} $. 
By the maximality of $ T $, we have that $\sum_{j=0}^d c_j $ is as large as possible so each $c_j$ is as large as possible. 

For $ c_{0} $ and $ c_{i} $ the maximal choices are $ a-1 $ and $ a $. In particular, $(a-1)e_i$ is the nearest lattice point to $ae_i$ laying on this edge of $P$.

For each $ j\neq 0,i,$ consider the two-dimensional face $G_j = P\cap\left\{ x \in \mathbb{R}^{d} \mid x_{k} = 0, \forall k \neq i,j \right\} $.
It is two-dimensional since it contains the points $\{ 0, e_i, e_j \}$.
The polygon $G_j$ has $ae_i$ as a vertex.
This vertex has two adjacent edges: one in direction $e_i$ and another, say $\tilde{E}$.
The lattice point in $G_j$ of the form $c_{j}e_{i} + e_{j}$ and largest possible $c_j$, is the nearest lattice point $ae_i$ in the edge $\tilde{E}$.

We conclude that the vertices of $ T $ are given by $ ae_{1} $ and all of its nearest neighbors, i.e., $T$ is the corner of $ae_{1}$ as claimed.

In conclusion, we have $d$ edges adjacent to the vertex $\mathbf{p}$ of $\mathsf{B}(P)$.
We claim there are no more edges adjacent to $\mathbf{p}$.
With the edges we already verified, we know that the feasible cone at that vertex $\mathbf{p}$ contains $\mathbb{R}^d_{\geq 0}$.
Consider an arbitrary barycenter $\mathbf{b}$ of a $d$-simplex with vertices in $P\cap M$.
It has every coordinate greater or equal than $\frac{1}{d+1}$.
This means that the vector $\mathbf{b}-\mathbf{p}$ is contained in $\mathbb{R}^d_{\geq 0}$.
Hence by Remark \ref{rem:feasible_rem}, the feasible cone at $\mathbf{p}$ is exactly the positive orthant, and we have found all the adjacent edges.

We have proven that for every vertex $v$ of $P$, the barycenter of its corner, $\mathcal{C}(v)$, is a vertex of $\mathsf{B}(P)$. 
We also showed that for each vertex $v$ of $P$, there are exactly $d$ edges $E_1,\ldots,E_d$ of $\mathsf{B}(P)$ adjacent to $\mathcal{C}(v)$ with the following property: the two vertices of $E_i$ are $\mathcal{C}(v)$ and $\mathcal{C}(w_i)$, for some corner $w_i$ of $P$. 
Consider the graph formed by all the vertices and edges of $\mathsf{B}(P)$. 
Since it is a connected graph, we conclude that every vertex of $\mathsf{B}(P)$ is of the form $\mathcal{C}(v)$, for a corner $v$ of $P$. 
Therefore, the vertices of the barycentric hull of $P$ are given by the barycenters of its corners. 

In particular, the map $ \mathcal{C}: \operatorname{Verts}(P) \to \operatorname{Verts}( \mathsf{B}(P)) $ is a bijection and edge directions are parallel to the corresponding edge direction between the corners in the original $ P $. This proves the last part of the theorem.
\end{proof}

\subsection{One-step resolution via normalized Nash blowups in arbitrary characteristic}
\label{ssec:snipe}

In this section we provide a family of cones whose corresponding toric variety is resolved with a single normalized Nash blowup over fields of arbitrary characteristic.

The following example illustrates the strategy for the main theorem of this section.
\begin{example}
Let $P$ be the unit square in $\R^2$, i.e., the convex hull of the points given by the columns of the matrix
   \[
   \begin{bmatrix}
       0&1&0&1 \\ 0&0&1&1
   \end{bmatrix}.
   \]
The barycentric hull of $P$ is the convex hull of the points given by the columns of the matrix
   \[
   \begin{bmatrix}
       1/3&2/3&1/3&2/3 \\ 1/3&1/3&2/3&2/3
   \end{bmatrix}.
   \]
The Hilbert basis of $\omega_P$ is the set $\{(0,0,1),(1,0,1),(0,1,1),(1,1,1)\}.$ By definition or by applying \cref{thm:baryhull},
$$\npc=\operatorname{Conv}(\{(1,1,3),(2,1,3),(1,2,3),(2,2,3)\}+\omega_P).$$
By \cref{thm:baryhull}, the vertices of $\npc$ are these same vectors. Let us verify that the feasible cone of $\npc$ at $(1,2,3)$, i.e., $K=\operatorname{Cone}(\npc-(1,2,3))$, is regular. A straightforward computation shows that $K$ can be generated by $(0,-1,0)$, $(1,0,0)$, $(0,1,1)$. Hence, it is regular. 

Notice that $(0,1)$ is the vertex of $P$ that corresponds to $(1,2,3)$ under the correspondence of \cref{thm:baryhull}. The key remark here is that the generators of $K$ are completely determined by the vertex $(0,1)$ and its neighbors in $P$.  

The same computations apply to the other vertices. By \cref{th:Nash-blowup}, the normalized Nash blowup of $X(\omega_P)$ is non-singular over fields of characteristic zero. By \cref{cor:char_free}, this is true also in positive characteristic.
\end{example}

\begin{theorem}\label{thm:altura1}
Let $ P $ be a smooth and G-flat lattice polytope. Let $X$ be the normal toric variety defined by the semigroup $\mathcal{S}(\omega_{P})$. Then the normalized Nash blowup of $X$ is non-singular over fields of arbitrary characteristic.
\end{theorem}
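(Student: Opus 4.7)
The plan is to combine \cref{cor:char_free} with an explicit description of the vertices and feasible cones of $\npc$ coming from \cref{thm:baryhull}. Once every such feasible cone is shown to be regular, the normal fan $\Sigma_0$ of $\npc$ will be regular, so $X(\Sigma_0)$ is smooth by \cref{th:Nash-blowup}, and then $X(\Sigma_p)$ is smooth for every prime $p$ by \cref{cor:char_free}.

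Set $d = \dim P$. By $G$-flatness, $G(\omega_P) = \{(v, 1) \mid v \in P \cap M\}$, so every element of $\mathcal{J}_0(\omega_P)$ has the form $(v_1 + \cdots + v_{d+1},\, d+1)$ with $v_i \in P \cap M$. This identifies $\operatorname{Conv}(\mathcal{J}_0(\omega_P))$ with the polytope $(d+1)\cdot(\mathsf{B}(P), 1) \subset M_\R \times \R$, whose vertices are $v_w := (d+1)(\mathcal{C}(w), 1)$ as $w$ ranges over vertices of $P$, by \cref{thm:baryhull}. Since $\npc = \operatorname{Conv}(\mathcal{J}_0(\omega_P)) + \omega_P$, and every direction from $v_w$ to another vertex of $\operatorname{Conv}(\mathcal{J}_0(\omega_P))$ lies in $M_\R \times \{0\}$ while the Hilbert basis of $\omega_P$ is contained in $M_\R \times \{1\}$, each $v_w$ remains a vertex of $\npc$, and
\[
\operatorname{fcone}(\npc, v_w) \;=\; \bigl(\operatorname{fcone}(P, w) \times \{0\}\bigr) \,+\, \omega_P,
\]
where the first summand is obtained by applying \eqref{eq:fcones} to $\mathsf{B}(P)$ at $\mathcal{C}(w)$.

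Next I simplify this Minkowski sum. Let $e_1, \ldots, e_d$ be the primitive edge vectors of $P$ at $w$, so that smoothness of $P$ gives $\operatorname{fcone}(P, w) = \operatorname{Cone}(e_1, \ldots, e_d)$ with $\{e_1, \ldots, e_d\}$ a $\Z$-basis of $M$. Every Hilbert generator $(v', 1)$ of $\omega_P$ decomposes as $(v' - w, 0) + (w, 1)$ with $v' - w \in \operatorname{fcone}(P, w)$, so $\omega_P \subset \operatorname{fcone}(P, w) \times \{0\} + \R_{\geq 0}(w, 1)$. Combining with $(w, 1) \in \omega_P$ yields
\[
\operatorname{fcone}(\npc, v_w) \;=\; \operatorname{Cone}\bigl((e_1, 0), \ldots, (e_d, 0), (w, 1)\bigr).
\]
The matrix whose columns are these generators has determinant $\pm \det(e_1, \ldots, e_d) = \pm 1$, so $\{(e_1, 0), \ldots, (e_d, 0), (w, 1)\}$ is a $\Z$-basis of $M \times \Z$ and $\operatorname{fcone}(\npc, v_w)$ is regular. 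Dualizing, every maximal cone of $\Sigma_0$ is regular, completing the proof.

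The main obstacle I anticipate is the feasible-cone identity: one must verify both that $v_w$ persists as a vertex of $\npc$ after Minkowski-adding the recession cone $\omega_P$, and that the resulting cone collapses to a simplicial cone with only $d+1$ extreme rays despite $\omega_P$ having many Hilbert generators. The first point follows from a separating-hyperplane argument comparing height $0$ and height $1$, and the second is a consequence of smoothness of $P$ at $w$, which lets one absorb every generator $(v', 1)$ of $\omega_P$ into the cone spanned by $(e_1, 0), \ldots, (e_d, 0)$ and $(w, 1)$. Once this is in place, regularity reduces to a single determinant computation.
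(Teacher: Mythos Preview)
Your proof is correct and follows essentially the same route as the paper: reduce to characteristic zero via \cref{cor:char_free}, identify the vertices of $\npc$ with the points $(d+1)(\mathcal{C}(w),1)$ using \cref{thm:baryhull}, and show that each feasible cone equals $\operatorname{Cone}\bigl((e_1,0),\ldots,(e_d,0),(w,1)\bigr)$, which is regular. The only stylistic difference is in the computation of that feasible cone: the paper passes through the truncated cone $\omega_P\cap(M_\R\times\R_{\geq 1})$ to obtain $\operatorname{fcone}(P,w)+\operatorname{Cone}\{(w,1)\}$, whereas you argue directly via the decomposition $(v',1)=(v'-w,0)+(w,1)$ of Hilbert generators; both arrive at the same simplicial cone.
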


\begin{proof}
By \cref{cor:char_free}, it is enough to prove the theorem over fields of characteristic zero. We prove that the feasible cone at every vertex of $\npc$ is regular. The conclusion then follows from \cref{th:Nash-blowup}.

Recall the description of $\npc$ (see (\ref{eq:N_polyhedron}), \cref{thm:baryhull}, and the proof of \cref{cor:char_free}):
the vertices of $\npc$ coincide with the vertices of the polytope $ \mathsf{C}(P) := (d+1)(\mathsf{B}(P),1)$;
in turn, the vertices of $\mathsf{C}(P)$ correspond to vertices defining corners of $P$ and its edges are parallel to those of $P$;
finally, starting at each vertex of $\npc$, there are rays along the directions of $(v,1)$ for some vertices $v$ of $P$ (which rays depend on the vertex of $\npc$). 

Let $\mathsf{C}(v) := (d+1)(\mathcal{C}(v),1)$ be a vertex of $\npc$, for some vertex $v$ defining a corner of $P$. 
Let $K$ be the feasible cone of $\npc$ at $\mathsf{C}(v)$, i.e., it is the cone generated by $\npc-\mathsf{C}(v)$. 
We claim that $K$ is equal to $\operatorname{fcone}(P,v) + \operatorname{Cone}\{(v,1)\}$.
By the coincidence of vertices of $\npc$ and $\mathsf{C}(v)$ we have
$$
K = \operatorname{fcone}(\npc,\mathsf{C}(v))=\operatorname{fcone}(\mathsf{C}(P)+\omega_P,\mathsf{C}(v))=\operatorname{fcone}(\mathsf{C}(P), \mathsf{C}(v)) + \omega_P.
$$
By Equation \eqref{eq:fcones}, we have 
$\operatorname{fcone}(\mathsf{C}(P), \mathsf{C}(v)) = \operatorname{fcone}((P,1),(v,1))$.
It follows that $K$ is equal to $\operatorname{fcone}((P,1),(v,1))+\omega_P=\operatorname{fcone}((P,1)+\omega_P,(v,1))$.
Notice that the polyhedron $(P,1)+\omega_P$ can be described as a truncation of $\omega_P$. Indeed, $(P,1)+\omega_P = \omega_P \cap (M_\mathbb{R}\times\mathbb{R}_{\geq1})$. Since the rays of $\omega_P$ are exactly the ones generated by $\{(p,1)\mid p\mbox{ a vertex of } P\}$, we conclude that 
$$K=\operatorname{fcone}(\omega_P \cap (M_\mathbb{R}\times\mathbb{R}_{\geq1}),(v,1))=\operatorname{fcone}(P,v) + \operatorname{Cone}\{(v,1)\}.$$
This finishes the proof of the claim. Now consider the following set
$$A=\{(u,1)-(v,1)\mid u\text{ is a vertex of the corner of  }v \}\cup\{(v,1)\}\,.$$
Since all edges adjacent to $v$ in $P$ are determined by the vertices different from $v$ in the corner of $v$, \cref{rem:feasible_rem} implies that $K$ is generated by $A$.

Finally, the set of vertices of the corner of $v$ different from $v$ forms a basis for $M$ by smoothness of $P$. Together with $(v,1)$, it forms a basis for $M\times \mathbb{Z}$. 
Thus, $K$ is regular. This proves the theorem in the case of characteristic zero. 
\end{proof}

\begin{remark}
The $G$-flat condition has to be in the hypothesis of Theorem \ref{thm:altura1} for the proof, but again, we point out that there are no known examples of smooth but non $G$-flat polytopes.
\end{remark}

We finish the paper with a final example showing a family of non-smooth polytopes that satisfies the conclusion of \cref{thm:altura1}.

\begin{example}
Let $ P $ be the lattice triangle with vertices $(0,0),(n,0),(0,1)$ with $n>1$. The polyhedron $ \npc $ has two vertices: $ (1,1,3) $ and $ (2n-1,1,3) $.
\begin{enumerate}
\item The feasible cone at $ (1,1,3) $ is spanned by the 3 rays from $ \omega_{P} $, $(0,0,1),(n,0,1),(0,1,0)$, and the difference $ (2n-1,1,3)-(1,1,3) = (2n-2,0,0)$. The resulting cone is the first octant which is unimodular.
\item Analogously, the feasible cone at $ (2n-1,1,3) $ is spanned by $(0,0,1), (n,0,1),(0,1,0)$, and the difference $ (1,1,3)-(2n-1,1,3) = (2-2n,0,0)$. Since $ (n,0,1) + n(-1,0,0) = (0,0,1) $, the resulting cone is spanned by the vectors $ (n,0,1),(0,1,0),(-1,0,0) $, which is unimodular for every $n>1$.
\end{enumerate}
Hence the singularities of the 3-dimensional affine toric variety defined by $\sip$ are resolved by a single normalized Nash blowup. This provides evidence towards an affirmative answer to the question of whether normalized Nash blowups resolves singularities in dimension 3.
\end{example}

\bibliographystyle{alpha}
\bibliography{ref}

\end{document}